\crefname{exmp}{Example}{Examples}
\newtheoremstyle{mytheoremstyle} 
    {5pt}                    
    {5pt}                    
    {\itshape}                   
    {\parindent}                           
    {\bf}                   
    {.}                          
    {.5em}                       
    {}  
\theoremstyle{mytheoremstyle}
\newtheorem{theorem}{Theorem}[section]
\newtheorem{lemm}[theorem]{Lemma}
\newtheorem{prop}[theorem]{Proposition}
\newtheoremstyle{mytdefintionstyle} 
    {5pt}                    
    {5pt}                    
    {\rm}                   
    {\parindent}                           
    {\bf}                   
    {.}                          
    {.5em}                       
    {}  
\theoremstyle{remark}
\newtheorem{rmrk}[theorem]{Remark}
\theoremstyle{mytdefintionstyle}
\newtheorem{defn}[theorem]{Definition}
\newtheoremstyle{exmp_contd}
    {5pt}                    
    {5pt}                    
    {\rm}                   
    {\parindent}                           
    {\bf}                   
    {.}                          
    {.5em}                       
    {\thmname{#1}\ \thmnumber{ #2}\thmnote{#3}\ (continued)}  
\theoremstyle{exmp_contd}
\DeclareMathOperator{\tr}{tr}
\newcommand\F{\mathbb{F}}
\newcommand{\Z}{\mathbb{Z}}
\renewcommand\phi{\varphi}
\newcommand{\M}{\mathcal{M}}
\newcommand{\PP}{\mathcal{P}}
\DeclareMathOperator\rk{rank}
\definecolor{darkgray}{rgb}{0.3,0.3,0.3}
\definecolor{LightGray}{gray}{0.9}
\newcommand{\topstrut}[1][1.2ex]{\setlength\bigstrutjot{#1}{\bigstrut[t]}}
\newcommand{\botstrut}[1][0.9ex]{\setlength\bigstrutjot{#1}{\bigstrut[b]}}
\definecolor{darkgreen}{rgb}{0.008,0.617,0.067}
\definecolor{brown}{rgb}{0.6,0.4,0.2}
\newif\ifjournalversion
\newcommand{\citecomment}[2][]{\citen{#2}#1\citevar}
\newcommand{\citeone}[1]{\citecomment{#1}}
\newcommand{\citetwo}[2][]{\citecomment[,~#1]{#2}}
\newcommand{\citevar}{\@ifnextchar\bgroup{;~\citeone}{\@ifnextchar[{;~\citetwo}{]}}}
\newcommand{\citefirst}{\@ifnextchar\bgroup{\citeone}{\@ifnextchar[{\citetwo}{]}}}
\newcommand{\cites}{[\citefirst}
\author{Lukas K\"uhne}
\address{Einstein Institute of Mathematics, The Hebrew University of Jerusalem, Giv’at Ram, Jerusalem, 91904, Israel}
\address{Max Planck Institute for Mathematics in the Sciences, Inselstr. 22, 04103, Leipzig, Germany}
\email{\href{mailto:Lukas Kuehne<lukas.kuhne@mis.mpg.de>}{lukas.kuhne@mis.mpg.de}}
\author{Rudi Pendavingh}
\address{Department of Mathematics and Computer Science, Eindhoven University of Technology, 5600 MB Eindhoven, Netherlands}
\email{\href{mailto:Rudi Pendavingh<r.a.pendavingh@tue.nl>}{r.a.pendavingh@tue.nl}}
\author{Geva Yashfe}
\address{Einstein Institute of Mathematics, The Hebrew University of Jerusalem, Giv’at Ram, 9190401 Jerusalem, Israel}
\email{\href{mailto:Geva Yashfe <geva.yashfe@mail.huji.ac.il >}{geva.yashfe@mail.huji.ac.il}}
\begin{document}

\title{Von Staudt Constructions for Skew-Linear and Multilinear Matroids}
\begin{abstract}
This paper compares skew-linear and multilinear matroid representations.
These are matroids that are representable over division rings and (roughly speaking) invertible matrices, respectively.
The main tool is the von Staudt construction, by which we translate our problems to algebra.
After giving an exposition of a simple variant of the von Staudt construction we present the following results:

\begin{itemize}
	\item Undecidability of several matroid representation problems over division rings.

	\item An example of a matroid with an infinite multilinear characteristic set, but which is not multilinear in characteristic $0$.

	\item An example of a skew-linear matroid that is not multilinear.
\end{itemize}
\end{abstract}

\thanks{L.K. was supported by a Minerva fellowship of the Max-Planck-Society, the Studienstiftung des deutschen Volkes and by ERC StG 716424 - CASe.
G.Y. was supported by ERC StG 716424 - CASe and by ISF grant 1050/16.}

\keywords{%
matroids, division ring representations, subspace arrangements, $c$-arrangements, multilinear matroids, von Staudt constructions, word problem, Weyl algebra, Baumslag-Solitar group.
}
\subjclass[2010]{%
	05B35, 52B40, 14N20, 52C35, 20F10, 03D40.
}
\maketitle

\section{Introduction}
The thread of this paper winds around the von Staudt construction. We collect some examples and theorems which discuss and compare the basic properties of skew-linear and multilinear matroid representations. With the von Staudt construction, this reduces to a sequence of questions about matrix rings and division rings.

Skew-linear matroids are those representable over a division ring; multilinear matroids are, roughly speaking, those representable in invertible matrices. Definitions appear in \cref{sec:prelim}, while \cref{sec:related_work} describes some appearances of multilinear matroids in mathematics and computer science.

\subsection{Results}
Our main results are:
\begin{enumerate}
	\item There is no algorithm to recognize skew-linear matroids\footnote{The result is somewhat finer: we show that there is a division ring $D$ such that it is undecidable whether a matroid is $D$-linear, and that it is also undecidable whether a matroid is representable over a division ring of a given characteristic.}.
	\item It is possible for a matroid which is not multilinear over a field of characteristic $0$ to have an infinite (multilinear) characteristic set. However, if the multilinear characteristic set of a matroid contains $0$ then it is infinite.
	\item Not every skew-linear matroid is multilinear.
\end{enumerate}

These theorems can be compared to the following known facts, whose proofs are rather different despite the similar statements:
\begin{enumerate}
	\item There is no algorithm to recognize multilinear matroids \cite{KY19}.
	\item A matroid with infinite skew-linear characteristic set can be represented over a skew-field of characteristic $0$. However, there is a matroid with skew-linear characteristic set $\{0\}$ \cite{EH91}.
	\item Not every multilinear matroid is skew-linear \cite{PvZ13}.
\end{enumerate}

The main tool used throughout the paper is a construction essentially due to von Staudt (\cite{vS57}), which reduces the solvability of a system of polynomial equations to a sequence of matroid representation problems. This works in both the skew-linear and the multilinear settings (the skew-linear case is classical). We work with a simple version of the construction, and provide detailed exposition.

Our results are proved by use of von Staudt constructions in conjunction with the following algebraic theorems:
\begin{enumerate}
	\item The word problem for division rings is undecidable (and slight refinements of this theorem) \cite{Mac73}.
	\item The Weyl algebra over a field of characteristic $0$ has no nontrivial finite-dimensional representations. However, the Weyl algebra over a field of characteristic $p$ is finite-dimensional over its center (see \cite{Et11} for example).
	\item There is a system of polynomial equations in noncommuting variables that has a solution in some division ring, but which cannot be solved in matrices over a field (\cref{thm:bs_polynomial_system} -- this seems to be new).
\end{enumerate}

\subsection{Related work}\label{sec:related_work}
This paper is related to \cite{PvZ13}, in which the second author and van Zwam define skew partial fields and matroid representations over them. This is a simultaneous generalization of both classes of matroid representations considered here. Several characterizations of representability are given in that paper, and it is proved that not every multilinear matroid is skew-linear. We answer a question posed there by showing that not every skew-linear matroid is multilinear, either.

Parts of this paper are also parallel to a part of \cite{EH91}, in which the authors describe a variant of the von Staudt construction over division rings and give some examples of skew-linear matroids with interesting characteristic sets.

Our results on undecidability owe their existence to Macintyre's paper \cite{Mac73} on the word problem in division rings. The results in that paper were improved and refined in \cite{Mac79}, but we do not use the latter paper here.

It is possible that multilinear matroids first appeared in \cite{GM88}, Goresky and Macpherson's work on stratified Morse theory. They defined $c$-arrangements (objects dual to multilinear matroids) as examples of subspace arrangements to be studied from a topological viewpoint.

Multilinear matroids also appear in cryptography and network coding: in cryptography, their ports are access structures of perfect ideal secret sharing schemes (it is not known if this construction yields all such access structures). See \cite{SA98,BBP14} for details. 
In network coding, the multilinear representability problem is equivalent to certain network capacity problems, in which only linear coding functions are permitted: see \cite{ElR10}, compare also~\cite{DFZ07}.

\subsection{History and applications of the von Staudt construction}\label{sec:history}
The authors are not historians; this subsection reflects their point of view and their interaction with this circle of ideas.

In \cite{vS57}, von Staudt introduced the algebra of throws. This is a geometric construction, based on the cross-ratio, for adding and multiplying points on a projective line. Using it, polynomial algebraic relations can be translated into corresponding point-and-line configurations (cf. \cite{RG11,VY65}).

This construction and its variants can be used to prove the coordinatization theorem of projective geometry, namely that a Desarguesian projective plane is isomorphic to the projective plane over a division ring (see \cite{VY65} for example). Together with Pascal's theorem, it is also the tool used to prove that Hilbert's axioms for plane Euclidean geometry accurately capture the notion of a real inner product space of dimension $2$ \cite{Hil02}.

In matroid theory, it has perhaps most notably been applied by Mn\"{e}v to prove his universality theorem for the realization spaces of oriented matroids~\cite{Mne88}.
One version of this theorem states that the space of solutions to a system of real polynomial equalities and strong inequalities\footnote{The solution space to such a system is called a basic primary semialgebraic set.} is homotopy equivalent to the realization space of an oriented matroid. This is a highly technical result, and the use of the construction requires great care.

The von Staudt construction is also closely related to Dowling geometries, which are essentially special cases in which only multiplication is used. See \cite{BBP14,KY19} for two related applications.

\subsection{Outline of the paper}
In \cref{sec:prelim} we recall the definitions of multilinear and skew-linear matroid representations, provide a short discussion of the projective plane over a division ring (which is later used in the von Staudt construction,) and define projective equivalence of representations.

In \cref{sec:VS} we describe a simple variant of the von Staudt construction, and prove the basic theorem relating polynomial equations in noncommuting variables with representation problems for the matroids produced by the construction.

\cref{sec:undec} is devoted to the undecidability of certain problems regarding skew-linearity of matroids.

\cref{sec:weyl} discusses characteristic sets and constructs an example of a matroid which is multilinear over all prime characteristics, but not over characteristic $0$.

Finally, in \cref{sec:baumslag} we prove that not every skew-linear matroid is multilinear.
also relevant to network coding problems:

\section*{Acknowledgments}
The first author would like to thank Oren Becker for helpful discussions of the Weyl algebra.

\section{Multilinear and skew-linear representations of matroids}\label{sec:prelim}

We use the following notation: if $V$ is a vector space over a field $\F$ we denote by $\mathrm{Gr}(c,V)$ the family of $c$-dimensional vector subspaces of $V$. 
We call subspaces $W_1,\ldots,W_n$ of $V$ \emph{independent} if \[\dim\sum_{i=1}^n W_i = \sum_{i=1}^n \dim W_i.\]
If $D$ is a division ring, all modules over $D$ have $D$ acting from the right. In particular, $D^r$ always denotes the right vector space of dimension $r$ over $D$.

\begin{defn}\label{def:rep}
	Let $M$ be a matroid of rank $r$ on the ground set $E$.
	\begin{enumerate}
		\item[(i)] A \emph{representation of $M$ over a division ring $D$} is a function $E\rightarrow A$, where $A$ is a right module over $D$, such that a subset of $E$ is independent in $M$ if and only if its image is skew-linearly independent in $A$.
		\item[(ii)] A \emph{multilinear representation} of $M$ over a field $\F$ is given by a $c\in\mathbb{N}$, a vector space $V$ over $\F$, and a function from $E$ to $\{0\}\cup\mathrm{Gr}(c,V)$ (where $0$ denotes the trivial subspace of $V$) such that:
		\begin{itemize}
			\item A subset of $E$ is independent if and only if its image is independent.
			\item If $W_1,\ldots,W_t$ is a subset of the image of $E$ in $\mathrm{Gr}(c,V)$, the sum $\sum_i W_i$ has dimension a multiple of $c$.
		\end{itemize}
	\end{enumerate}
\end{defn}

If a matroid has a representation over some division ring, we call it \emph{skew-linear} or division ring representable. If it has a multilinear representation (of order $c$) we say it is representable as a \emph{$c$-arrangement}, or call it multilinear.

We can write these definitions in coordinates: the module $A$ above can be replaced by~$D^r$ (written as a column space, with $D$ acting from the right). The vector space $V$ can be replaced with $\F^{cr}$, and $\mathrm{Gr}(c,V)$ can be replaced with the set of matrices in $M_{rc,c}(\F)$ having rank $c$ (the subspace corresponding to such a matrix is its column span).

Concretely, in coordinates a representation can be given by a matrix. Suppose $M$ is a matroid as above and $E=[n]$. Then a representation of $M$ over a division ring $D$ can be given by a matrix in $M_{r,n}(D)$, where the $j$-th column is the image of $j\in E$. Similarly, a multilinear representation of order $c$ over $\F$ can be given by a matrix in $M_{rc,cn}(\F)$: we think of this as a block matrix with blocks of size $c\times c$, so the $j$-th block column has size $r\times 1$ (in blocks) and its column span is the image of $j\in E$. 

We can then discuss the column rank of matrices instead of skew-linear or linear independence. For example, in this language, a multilinear representation of order $c$ is a matrix in $M_{rc,cn}(\F)$ such that for a subset of $E$ having rank $s$, the minor consisting of the corresponding block columns of the matrix has rank $c\cdot s$.

\subsection{Projective equivalence of representations}
Suppose $M$ is represented over a division ring, or as a $c$-arrangement. A change of coordinates in the ambient module or vector space gives rise to a different representation which is nevertheless the same in every essential sense. Projective equivalence is the equivalence relation on matroid representations which captures this idea (for the case of representations over a field, see \cite[Section 6.3]{Oxl11}). We will use such equivalences to bring representations to a more convenient form.

\begin{defn}
	Let $D$ be a division ring and let $B\in M_{r,n}(D)$. A matrix $B'\in M_{r,n}(D)$ is projectively equivalent to $B$ if it can be obtained from $B$ by steps of the form:
	\begin{enumerate}[(i)]
		\item Multiplying a column of $B$ by an invertible scalar (from the right).
		\item Multiplying $B$ by an invertible $r\times r$ matrix from the left.
	\end{enumerate}
\end{defn}

It is clear that if $B$ is a representation matrix of a matroid $M$, any $B'$ projectively equivalent to $B$ also represents $M$.

For matroid representations in a general module $A$ over $D$, these steps correspond to applying $D$-linear automorphisms of $A$ and scaling individual elements in the image by an invertible scalar.

The following is the multilinear analogue:

\begin{defn}
	Let $\F$ be a field and let $B\in M_{rc,cn}(\F)$ be a multilinear representation matrix of $M$ over $\F$. A matrix $B'\in M_{rc,cn}(D)$ is a representation of $M$ projectively equivalent to $B$ if it is obtained from $B$ by steps of the form:
	\begin{enumerate}[(i)]
		\item Multiplying a block column of $B$ by an invertible matrix (from the right).
		\item Multiplying $B$ by an invertible $rc\times rc$ matrix from the left.
	\end{enumerate}
\end{defn}

Note that multiplying a block column by an invertible matrix from the right does not change its column span. Equivalently, the coordinate-free description of projective equivalence for multilinear representations involves only automorphisms of the ambient vector space.

\subsection{A brief review of the projective plane}

We review some definitions and set up notation for projective planes coordinatized by division rings. A reference for this section is~\cite[Chapter 6]{Har67}.

The (right) projective plane $D\mathbb{P}^{2}$ over a division ring
$D$ has points $(D^3 \setminus \{(0,0,0)\})/\sim$, 
where $\sim$ is the equivalence relation in which
$(a,b,c)\sim (a',b',c')$ if and only if there exists $\lambda\in D\setminus\{0\}$ such that $(a\lambda,b\lambda,c\lambda)=(a',b',c')$.
The equivalence class of $(a,b,c)$ is denoted $\left[a:b:c\right]$.

A projective line is a subset of the form
\[
\left\{ \left[a\lambda+a^{\prime}\mu:b\lambda+b^{\prime}\mu:c\lambda+c^{\prime}\mu\right]\mid\lambda,\mu\in D,\text{ not both zero}\right\} ,
\]
where $\left(a,b,c\right)$ and $\left(a^{\prime},b^{\prime},c^{\prime}\right)\in D^{3}$
are linearly independent vectors. 

Thus the points of $D\mathbb{P}^{2}$
are equivalence classes, and each equivalence class is a one-dimensional
subspace of $D^{3}$, minus the zero vector.
In this language a projective line is the set of one-dimensional subspaces
contained in a two-dimensional subspace of $D^{3}$.

A projective line is determined by any two of its points, and any
two lines intersect at a unique point. A projective transformation
of $D\mathbb{P}^{2}$ is a bijection $D\mathbb{P}^{2}\rightarrow D\mathbb{P}^{2}$
such that the image and preimage of every projective line is again
a projective line. One part of the fundamental theorem of projective geometry
states that any such transformation is given by composing an automorphism
of $D$ with a map defined on representatives in $D^{3}$ by 
\[
[x,y,z]\mapsto\left(A\left[\begin{matrix}x\\
y\\
z
\end{matrix}\right]\right)^{T}
\]
where $A\in M_{3}(D)$ is an invertible matrix.

The affine plane can be embedded in the projective plane by mapping
a point $(x,y)$ to $\left[1:x:y\right]$. This map takes affine lines
into projective lines. Two affine lines are parallel if and only if
their image intersects on the line at infinity, which consists precisely
of those points of $D\mathbb{P}^{2}$ which are not in the image of
the embedding.

\subsubsection{Notation}

Our convention is to denote by $O$ (for origin) the point $\left[1:0:0\right]$,
and to call the line spanned by $x_{\infty}\coloneqq\left[0:1:0\right]$
and $y_{\infty}\coloneqq\left[0:0:1\right]$ the line at infinity.
We further denote $x_{1}\coloneqq\left[1:1:0\right]$ and $y_{1}\coloneqq\left[1:0:1\right]$.

\subsubsection{\texorpdfstring{Matroid representations and $D\mathbb{P}^2$}{Matroid representations and the projective plane}}

Let $M$ be a loop-free matroid of rank $3$ on the ground set $E=[n]$. A
representation of $M$ over a division ring $D$ is essentially a
configuration of points $\{p_{i}\}_{i\in[n]}$ in $D\mathbb{P}^{2}$.
Each column $\left[\begin{smallmatrix}x\\
y\\
z
\end{smallmatrix}\right]$ of the matrix of the representation corresponds to a point $\left[x:y:z\right]$
of $D\mathbb{P}^{2}$, and three points of $M$ are dependent precisely
when the corresponding points in $D\mathbb{P}^{2}$ lie on a line.
Under this correspondence, a set of parallel elements of $M$ is a
single point of $D\mathbb{P}^{2}$, indexed several times. 

Note that passing from nonzero elements of $D^{3}$ to their images in the quotient $D\mathbb{P}^{2}$
causes essentially no loss of information. This is because multiplying
a column of the matrix of a representation by a scalar from the right
gives a projectively equivalent representation.

We will use the following lemma.
\begin{lemm}
	\label[lemma]{lem:proj_equiv} If $\ell_{1}$ and $\ell_{2}$ are
	two lines of $D\mathbb{P}^{2}$ intersecting at a point $\overline{O}$, and
	$\overline{a},\overline{b}\in\ell_{1}$, $\overline{c},\overline{d}\in\ell_{2}$ are points distinct from each
	other and $\overline{O}$, there exists a projective transformation taking $\overline{O},\overline{a},\overline{b},\overline{c},\overline{d}$
	to
	\[
	\left[1:0:0\right],\left[1:1:0\right],\left[0:1:0\right],\left[1:0:1\right],\left[0:0:1\right],
	\]
	respectively.
\end{lemm}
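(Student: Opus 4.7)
The plan is to lift the five points to representatives in $D^3$, normalize by right-rescaling, and exhibit the desired projective transformation as an explicit invertible right-linear map.

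First, I pick any nonzero representatives $v_O, v_a, v_b, v_c, v_d \in D^3$. Since $\overline{a}, \overline{b}, \overline{O}$ are three distinct points on $\ell_1$, their lifts span a common $2$-dimensional right subspace $\widetilde\ell_1 \subset D^3$, with $\{v_a, v_b\}$ a basis. Writing $v_O = v_a \lambda_a + v_b \lambda_b$, both scalars are nonzero since $\overline{O}$ differs from $\overline{a}$ and $\overline{b}$. Replacing $v_a$ and $v_b$ by $v_a \lambda_a$ and $v_b \lambda_b$ (harmless, as these represent the same projective points) normalizes $v_O = v_a + v_b$, and the analogous move on $\ell_2$ normalizes $v_O = v_c + v_d$. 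Moreover $\{v_a, v_b, v_c\}$ is a basis of $D^3$: if $v_c$ lay in $\widetilde\ell_1$ then $\overline{c} \in \ell_1 \cap \ell_2 = \{\overline{O}\}$, contradicting the hypothesis.

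Next, I define a right-linear map $T \colon D^3 \to D^3$ on this basis by
\[ T(v_a) = (1,1,0)^T, \qquad T(v_b) = (0,-1,0)^T, \qquad T(v_c) = (1,0,1)^T. \]
The three images again form a basis of $D^3$, so $T$ is invertible and induces a projective transformation of $D\mathbb{P}^{2}$. Direct computation using $v_O = v_a + v_b$ and $v_d = v_O - v_c$ gives
\[ T(v_O) = (1,0,0)^T, \qquad T(v_d) = (0,0,-1)^T, \]
and each of the five images is a nonzero right-scalar multiple of the prescribed target representative, so $T$ sends $\overline{O}, \overline{a}, \overline{b}, \overline{c}, \overline{d}$ to $[1:0:0], [1:1:0], [0:1:0], [1:0:1], [0:0:1]$ respectively. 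The coefficient $-1$ in $T(v_b)$ is forced by the constraint that $T(v_a) + T(v_b)$ represent $[1:0:0]$.

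The main obstacle is purely bookkeeping: the rescalings and the extension of $T$ from a basis must be applied on the correct side because $D$ is noncommutative, but once the convention is fixed the computation is straightforward. I do not expect any genuinely hard step.
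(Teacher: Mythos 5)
Your proof is correct and follows essentially the same strategy as the paper's: lift the five points to $D^3$, normalize representatives by right scalar multiplication using the collinearity relations on $\ell_1$ and $\ell_2$, and then apply an invertible right-linear map (equivalently, left multiplication by an invertible matrix) sending a chosen basis to prescribed vectors. The only cosmetic difference is that you normalize $v_O=v_a+v_b=v_c+v_d$ and map the basis $\{v_a,v_b,v_c\}$, whereas the paper expresses $a$ and $c$ in terms of $O,b,d$ and inverts the matrix with columns $O,b,d$; both computations are equally valid over a noncommutative $D$.
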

\begin{rmrk}\label[remark]{rem:2.4_multilinear}
This can easily be proved using the fundamental theorem of projective geometry, together with the fact that $\{\bar{a},\ldots,\bar{d}\}$ is a projective frame. We give a computational proof, which also applies to the multilinear analogue in which each point $[x:y:z]\in D^3$ is replaced by the column span of the block matrix $\begin{bsmallmatrix}X \\ Y \\ Z\end{bsmallmatrix}$, where $X,Y,$ and $Z$ are $c\times c$ matrices over a field.
The analogy between the projective and the multilinear case can be formalized: one can say that a projective space over a matrix ring is a Grassmannian, and define the Grassmannian, by analogy with the projective space, as a quotient of the collection of matrices of a given size which have full column rank.
\end{rmrk}
\begin{proof}
	Take representatives $O,\ldots,d$ of the points
	in $D^{3}$. We have $a=O\lambda+b\mu$
	and $c=O\lambda^{\prime}+d\mu^{\prime}$ for some $\lambda,\mu,\lambda^{\prime},\mu^{\prime} \in D$.
	
	By replacing $b$ with the representative $b\mu\lambda^{-1}$
	of $\overline{b}$, we may assume $\mu=\lambda$ without loss of generality,
	and similarly $\mu^{\prime}=\lambda^{\prime}$. 
	
	Writing the points $O,\ldots,d$ as column vectors,
	multiplying from the left by the left inverse of the matrix
	\[
	\left(\begin{matrix}| & | & |\\
	O & b & d\\
	| & | & |
	\end{matrix}\right)
	\]
	(with columns $O,b,d$, respectively)
	takes $O,b,d$ to $\begin{bsmallmatrix}1\\
	0\\
	0
	\end{bsmallmatrix},\begin{bsmallmatrix}0\\
	1\\
	0
	\end{bsmallmatrix},\begin{bsmallmatrix}0\\
	0\\
	1
	\end{bsmallmatrix},$and $a,c$ to $\begin{bsmallmatrix}\lambda\\
	\lambda\\
	0
	\end{bsmallmatrix},\begin{bsmallmatrix}\lambda^{\prime}\\
	0\\
	\lambda^{\prime}
	\end{bsmallmatrix}$, respectively. Passing back to elements of $D\mathbb{P}^{2}$ gives
	the result.
\end{proof}

\section{Von Staudt constructions in matrices and division rings}\label{sec:VS}
In this section we describe a variant of the von Staudt construction, which gives a method for turning the solvability of a system of polynomial equations into a matroid representation problem.

\subsection{The geometric idea}

The basic geometric idea is most easily pictured in the real affine
plane: see~\Cref{fig:staudt_explanation}. We begin with two lines,
an ``$x$-axis'' and a ``$y$-axis'' intersecting at an origin
point $O$.
In~\Cref{fig:staudt_explanation}, we represent these axes by two perpendicular lines.
We choose distinguished unit points $x_{1},y_{1}$
on the axes with $\left|Ox_{1}\right|=\left|Oy_{1}\right|=1$.

First consider the situation in~\Cref{fig:parallel_multiplication}.
The segments $\overline{x_{k}y_{k}}$ and $\overline{x_{1}y_{1}}$
are parallel, implying $|Ox_{k}|=|Oy_{k}|$. Similarly, we see
\[
\frac{|Oy_{k}|}{|Oy_{1}|}=\frac{|Ox_{i}|}{|Ox_{j}|}
\]
because $\overline{x_{j}y_{1}}$ is parallel to $\overline{x_{i}y_{k}}$.
Rearranging, we find 
\[
\left|Oy_{k}\right|\cdot\left|Ox_{j}\right|=\left|Ox_{i}\right|,
\]
a multiplicative relation.

Now consider~\Cref{fig:parallel_addition}. The triangles $Oy_{1}x_{j}$
and $x_{k}r_kx_{i}$ are similar, since corresponding segments are parallel.
In fact, the two triangles are congruent because $\left|Oy_{1}\right|=\left|x_{k}r_k\right|$:
this follows from the fact that $Oy_{1}r_kx_{k}$ is a parallelogram. The
congruence of the two triangles gives $\left|x_{k}x_{i}\right|=\left|Ox_{j}\right|$,
so 
\[
\left|Ox_{i}\right|=\left|Ox_{k}\right|+\left|x_{k}x_{i}\right|=\left|Ox_{k}\right|+\left|Ox_{j}\right|.
\]
The equation $\left|Ox_{i}\right|=\left|Ox_{k}\right|+\left|Ox_{j}\right|$
is the desired additive relation.

In both cases, the given data was a point and line configuration in
which certain lines are parallel, together with the position of $x_{1}$
and $y_{1}$ on their respective lines.
Since matroid representations in rank $3$ naturally live in a projective plane, we replace the affine plane by the projective plane in the following construction.

\begin{figure}
	\begin{subfigure}[b]{0.5\linewidth}
		\centering
		\includegraphics[width=1\linewidth]{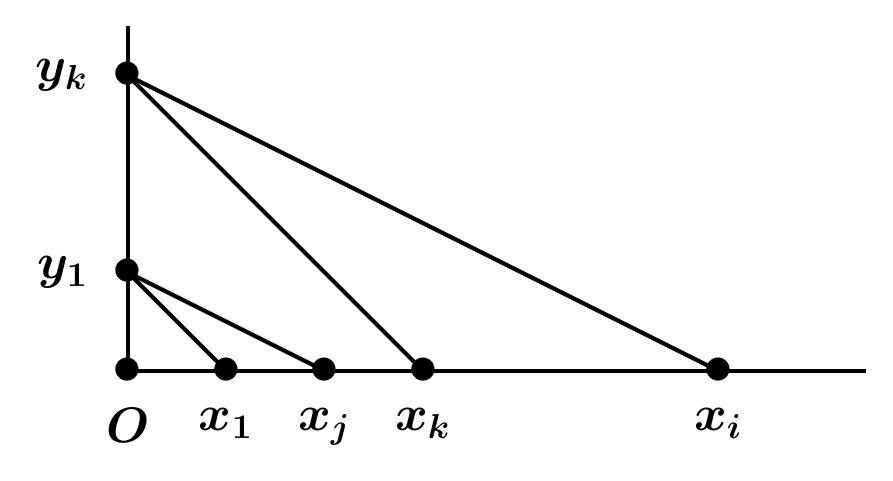} 
		\caption{A diagram exhibiting a multiplicative relation.} 
		\label{fig:parallel_multiplication} 
	\end{subfigure}
	\begin{subfigure}[b]{0.5\linewidth}
		\centering
		\includegraphics[width=1\linewidth]{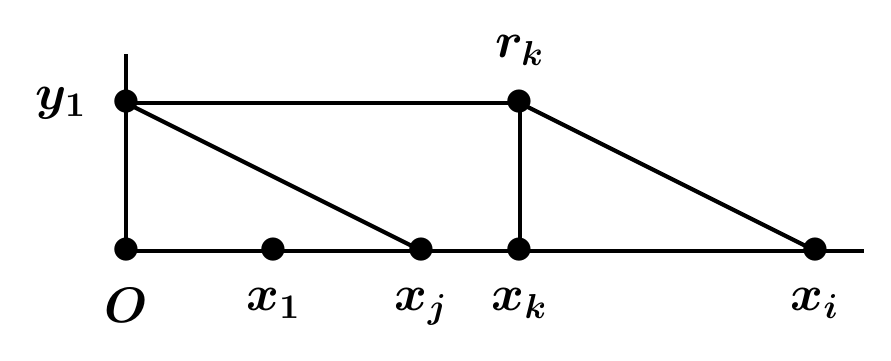} 
		\caption{A diagram exhibiting an additive relation.} 
		\label{fig:parallel_addition} 
	\end{subfigure} 
	\caption{Two diagrams motivating the following von Staudt constructions.}
	\label{fig:staudt_explanation}
\end{figure}

\subsection{The construction}\label{sec:VC_construction}

\begin{figure}
	\begin{subfigure}[b]{0.5\linewidth}
		\centering
		\includegraphics[width=1\linewidth]{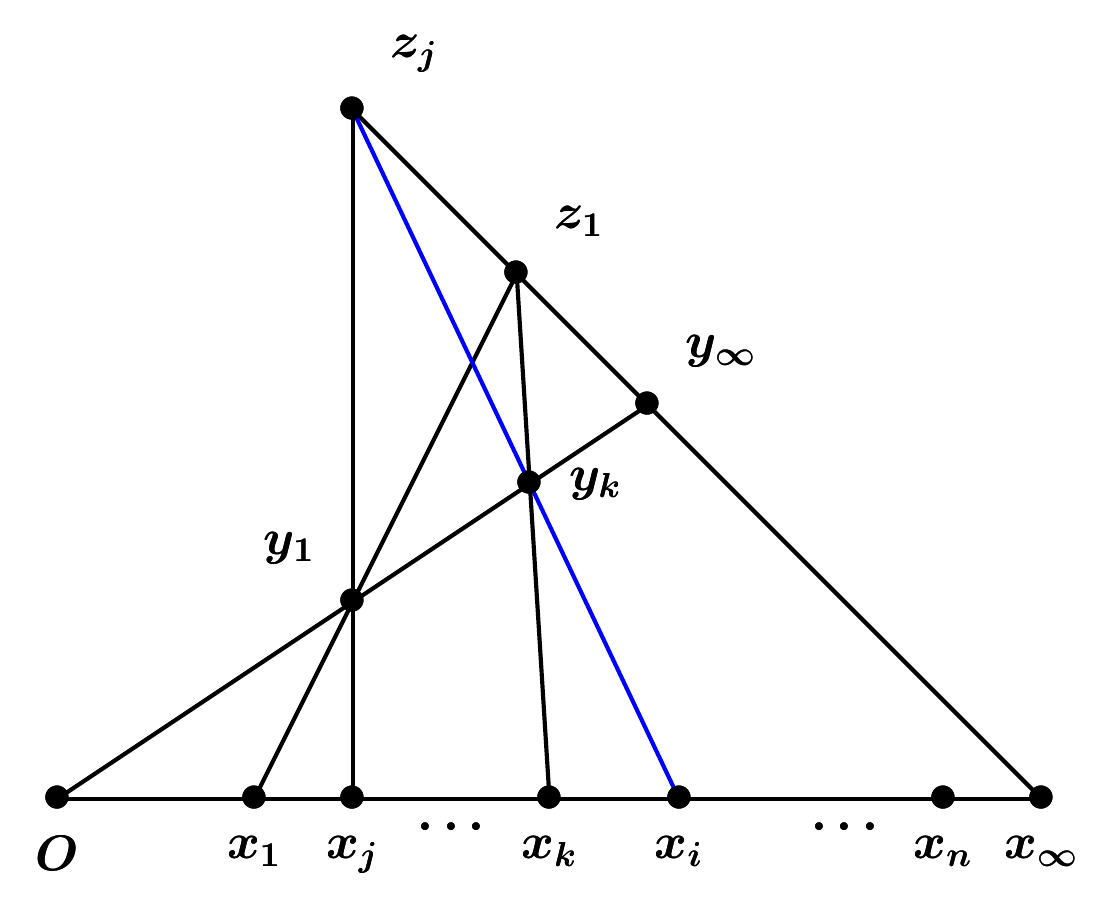} 
		\caption{Part of $M_{\PP}$ for $P$: $X_i=X_j\cdot X_k$.} 
		\label{fig:staudt_multiplication} 
	\end{subfigure}
	\begin{subfigure}[b]{0.5\linewidth}
		\centering
		\includegraphics[width=1\linewidth]{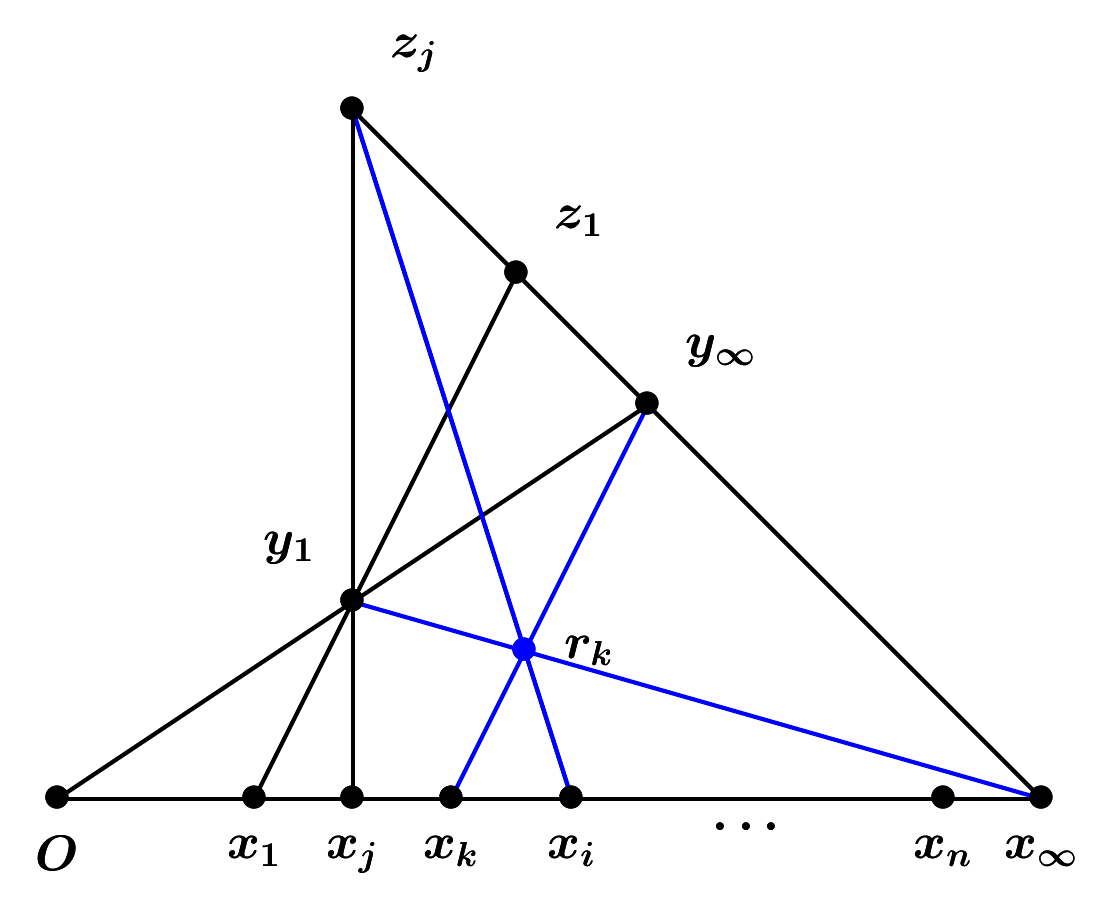} 
		\caption{Part of $M_{\PP}$ for $P$: $X_i=X_j+ X_k$.} 
		\label{fig:staudt_addition} 
	\end{subfigure} 
	\caption{The two building blocks of the von Staudt matroid $M_{\PP}$.
		The elements and circuits corresponding to the polynomial $P$ are depicted in blue.
		These pictures correspond to the ones shown in~\Cref{fig:staudt_explanation} after adding the line at infinity.}
	\label{fig:staudt_matroid} 
\end{figure}
The von Staudt construction encodes equations of the form $X_i=X_j+X_k$ or $X_i=X_j \cdot X_k$ in the circuits of a matroid.
We begin by showing that any system of polynomial equations is equivalent to one in which each equation is of one of these forms.

Recall that $\mathbb{Z}\langle X_{1},\dots,X_{n}\rangle$ is the ring generated over $\mathbb{Z}$ by $n$ noncommuting variables. A \emph{polynomial equation} (with integer coefficients) in the variables $X_1,\ldots,X_n$ is an expression of the form $P=Q$, where $P,Q\in \mathbb{Z}\langle X_{1},\dots,X_{n}\rangle$.

Throughout this article, we assume all systems of equations to be finite.

\begin{defn}
	An equation in the variables $X_1,\ldots,X_n$ is called \emph{atomic}\footnote{In~\cite{RG99}, a related concept is called Shor normal form.} if it is of one of the following forms: 
	\begin{enumerate}
		\item $X_i=X_j+X_k$ with $1\le i,j,k\le n$ or,
		\item $X_i=X_j \cdot X_k$ with $1\le i,j,k\le n$.
	\end{enumerate}
	A system of equations is called \emph{atomic} if it consists of atomic equations together with $X_0=0$ and $X_1=1$.
\end{defn}

Let $P=Q$ be a polynomial equation in the variables $X_1,\ldots,X_n$. A \emph{solution} to this equation in a division ring $D$ is a tuple $(d_1,\ldots,d_n) \in D^n$ such that \[P(d_1,\ldots,d_n)=Q(d_1,\ldots,d_n)\] in $D$. Similarly, a solution in $c\times c$ matrices over a field $\mathbb{F}$ is a tuple $(A_1,\ldots,A_n)\in \left(M_c(\F)\right)^n$ such that $P(A_1,\ldots,A_n)=Q(A_1,\ldots,A_n)$.

\begin{lemm}\label{lem:atomic}
	Let $\mathcal{P}$ be a system of polynomial equations.
	Then there exists an $N\in\mathbb{N}$ and a system of polynomial equations $\mathcal{P}'$ in the variables $X_{0},\dots,X_{N}$ such that:
	
	\begin{enumerate}[(i)]
		\item\label{lem:atomic:i} The system $\mathcal{P}'$ is atomic.
		\item For any division ring $D$, the system $\mathcal{P}$ has a solution in $D$ if and only if $\mathcal{P}'$ has a solution in $D$.
		\item For any field $\F$ and natural number $c$, $\mathcal{P}$ has a solution in $M_c(\F)$ if and only if $\mathcal{P}'$ has a solution in $M_c(\F)$.
	\end{enumerate}
\end{lemm}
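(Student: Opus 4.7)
The plan is to introduce one auxiliary variable for each subexpression that appears when evaluating the polynomials of $\mathcal{P}$, linked to its children by a single atomic equation. Since the construction uses only the ring operations $+$ and $\cdot$, the correspondence between solutions of $\mathcal{P}$ and solutions of the resulting atomic system $\mathcal{P}'$ will be valid uniformly over any division ring $D$ and any matrix ring $M_c(\F)$, giving (ii) and (iii) at the same time.

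Concretely, I would reserve $X_0$ and $X_1$ for the roles of $0$ and $1$, relabel the original variables of $\mathcal{P}$ accordingly, and introduce a variable $Y$ together with the atomic equation $X_0 = X_1 + Y$ (which forces $Y = -1$ in any ring) so that subtraction becomes available. For each integer $a$ appearing as a coefficient in some polynomial of $\mathcal{P}$, I would then build a variable $C_a$ equal to $a$ in the ring by iterated atomic additions of $X_1$ or of $Y$, starting from $C_0 := X_0$. Next, I would put each equation $P = Q$ of $\mathcal{P}$ in the form $R = 0$ with $R = P - Q$, expand $R$ as a sum of monomials $\sum_t a_t X_{i^{(t)}_1} \cdots X_{i^{(t)}_{k_t}}$, and for every such monomial introduce auxiliary variables $M_t^{(1)}, \ldots, M_t^{(k_t)}, T_t$ with the atomic equations
\[
M_t^{(1)} = X_1 \cdot X_{i^{(t)}_1}, \qquad M_t^{(j)} = M_t^{(j-1)} \cdot X_{i^{(t)}_j}\ (2 \le j \le k_t), \qquad T_t = C_{a_t} \cdot M_t^{(k_t)}.
\]
Then I would introduce $S_1, \ldots, S_m$ that accumulate the sum via $S_1 = T_1 + X_0$ and $S_j = S_{j-1} + T_j$ for $j \ge 2$, and close off this equation of $\mathcal{P}$ with the atomic equation $S_m = X_0 + X_0$, which (together with $X_0 = 0$) enforces $R = 0$.

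Verifying that this $\mathcal{P}'$ is solvable in $D$ (resp.\ in $M_c(\F)$) if and only if $\mathcal{P}$ is solvable there is then essentially automatic: a solution of $\mathcal{P}$ extends uniquely to one of $\mathcal{P}'$ by assigning each auxiliary variable the value of the subexpression it names, and conversely any solution of $\mathcal{P}'$ must satisfy the intended interpretations of $Y$, the $C_a$, the $M_t^{(j)}$, the $T_t$, and the $S_j$ by induction on the order in which they were introduced, so its restriction to the original variables solves $\mathcal{P}$. I do not expect a real mathematical obstacle here; the only care needed is in the bookkeeping, specifically in arranging that negative integer coefficients and the ``$=0$'' constraint are expressible using only the two atomic forms---which is exactly the role played by $Y$, the $C_a$, and the final equation $S_m = X_0 + X_0$.
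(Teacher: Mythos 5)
Your proposal is correct and follows essentially the same route as the paper: introduce an auxiliary variable for each subexpression, link it to its children by a single atomic equation of the form $X_i = X_j + X_k$ or $X_i = X_j\cdot X_k$, and observe that the resulting correspondence between solutions is automatic over any ring, so divison rings and matrix rings are handled uniformly. The only differences are cosmetic bookkeeping choices (you normalize each equation to $R=0$ and encode integer coefficients via a variable $Y$ forced to equal $-1$, while the paper atomizes both sides of $P=Q$ and expands integers as iterated sums of $X_1$, with one auxiliary variable per negative integer), and neither choice affects correctness.
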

\begin{proof}
We describe an explicit way to construct the system $\mathcal{P}'$.
The statements on equivalent solvability follow immediately.

First replace any positive integer $n$ by the expression $\sum_{i=1}^n X_1$.
Replace any $0$ by the variable $X_0$.
Subsequently, replace any negative integer $m$ by a new variable $X'$ together with the equation $X_0=X'+\sum_{i=1}^{-m}X_1$.

Now consider a monomial $X_{i_{1}}\cdot\ldots\cdot X_{i_{k}}$.
Introducing new variables $X_1'\dots,X_k'$ the monomial can be replaced
by the single variable $X_{k}^{\prime}$ after adding the new equations
\[
X_{2}^{\prime}=X_{i_{1}}\cdot X_{i_{2}},\quad
X_{3}^{\prime}=X_{2}^{\prime}\cdot X_{i_{3}}, \quad
\cdots \quad
X_{k}^{\prime}=X_{k-1}^{\prime}\cdot X_{i_{k}}.
\]
Similarly, a sum of variables $X_{i_{1}}+\ldots+X_{i_{k}}$ can be
replaced by a single variable, using the same process with $\cdot$
replaced by $+$.

Lastly, add the equations $X_0=0$ and $X_1=1$ to the system $\mathcal{P}'$.
\end{proof}

Let $\mathcal{P}$ be an atomic system of equations.
We construct a collection of circuits $M_{\mathcal{P}}$ on a ground set $E_{\mathcal{P}}$ as follows. 
\begin{enumerate}
	\item Begin with the ground set
	\[E_\mathcal{P} = \{O,x_\infty,y_\infty\} \cup \{x_i\}_{i=1}^N \cup \{y_i\}_{i=1}^N \cup \{z_i\}_{i=1}^N. \]
	Define each subset of size $3$ of one of the sets:
	\[\{O,x_\infty\}\cup \{x_i\}_{i=1}^N,\qquad \{O,y_\infty\}\cup \{y_i\}_{i=1}^N,\qquad\mathrm{and}\quad \{x_\infty,y_\infty\}\cup \{z_i\}_{i=1}^N\]
	to be a circuit in $M_\mathcal{P}$.
	\item We now add circuits for each equation:
	\begin{enumerate}[i.]
		\item For each equation of the form $X_{i}=X_{j}\cdot X_{k}$, add the
		circuit $\left\{ x_{i},y_{k},z_{j}\right\} $ depicted in \Cref{fig:staudt_multiplication}. 
		
		\item For each equation of the form $X_{i}=X_{j}+X_{k}$, add an element
		$r_{k}$ to the ground set together with the circuits
		\[
		\left\{ y_{1},r_{k},x_{\infty}\right\} ,\quad\left\{ x_{k},r_{k},y_{\infty}\right\} ,\quad\left\{ x_{i},r_{k},z_{j}\right\} 
		\]
		depicted in \Cref{fig:staudt_addition}.
	\end{enumerate}
	\item Finally, define each subset of size $4$ of $E_\mathcal{P}$ which does not yet contain a circuit to be a circuit.
\end{enumerate}

\textbf{Warning.}
It is possible that $M_\mathcal{P}$ is not the family of circuits of a matroid. If $\mathcal{P}$ contains an equation of the form $X_{i}=X_{j}\cdot X_{i}$, then $M_\mathcal{P}$ contains the two circuits $\left\{ x_{i},y_{j},z_{i}\right\} ,\left\{ x_{i},y_{1},z_{i}\right\} $, but has no circuit contained in $\left\{ x_{i},y_{1},y_{j}\right\} $. Thus $M_\mathcal{P}$ does not satisfy the circuit elimination axiom.

Nevertheless, we can discuss its matroidal weak images, which are defined as follows.
\begin{defn}
	A matroid $M^{\prime}$ is a \emph{weak image} of a collection of circuits
	$M$ on the same ground set if every circuit of $M$ contains a circuit
	of $M^{\prime}$.
\end{defn}

This enables us to define the set of von Staudt matroids associated to a system of equations.
\begin{defn}\label{def:vS_matroid}
	Let $\mathcal{P}$ be an atomic system of equations.
	If $M_{\mathcal{P}}$ is a matroid, we call it \emph{the principal
		von Staudt matroid of $\mathcal{P}$}.	
	The \emph{family of von Staudt matroids associated to $\mathcal{P}$}
	is the set of matroidal weak images $M_{\mathcal{P}}^{\prime}$
	of $M_{\mathcal{P}}$ which satisfy the following conditions: 
\begin{enumerate}
	\item $M_{\mathcal{P}}^{\prime}$ is loop-free.
	\item The restrictions of $M_{\mathcal{P}}$ and $M_{\mathcal{P}}^{\prime}$
	to $\left\{ O,x_{1},y_{1},x_{\infty},y_{\infty}\right\} $ are identical:
	the five points are distinct, forming two lines which intersect at
	$O$.
	\item No element $x_{i}\neq x_{\infty}$ is parallel to $x_{\infty}$ in
	$M_{\mathcal{P}}^{\prime}$. 
\end{enumerate}
We denote this set by $\mathcal{M}_{\mathcal{P}}$.

The family $\mathcal{M}_{\mathcal{P}}$ is always finite, and can
be computed from $\mathcal{P}$. It is possible to replace $\mathcal{M}_{\mathcal{P}}$
with a family of simple matroids having the same properties by replacing
each $M\in\mathcal{M}_{\mathcal{P}}$ with its simplification, or
in other words, by deleting all but one from each maximal subset of
mutually parallel elements of each $M\in\mathcal{M}_{\mathcal{P}}$.
\end{defn}

The following theorem describes the relation between solvability of $\mathcal{P}$ and representability of members of $\mathcal{M_P}$.

\begin{theorem}\label{thm:VS}
	Let $\mathcal{P}$ be an atomic system of equations in the variables $X_0,\dots, X_N$, and let $\mathcal{M}_{\mathcal{P}}$
	be the family of von Staudt matroids associated to $\mathcal{P}$.

	\begin{enumerate}[(i)]
		\item\label{thm:i} If $\mathcal{P}$ has a solution over a division ring $D$, then at least one member of $\mathcal{M}_{\mathcal{P}}$ is representable over $D$. 
		\item\label{thm:ii} If a matroid in $\mathcal{M}_{\mathcal{P}}$ is representable over a division ring $D$ then $\mathcal{P}$ has a solution in~$D$.
		\item\label{thm:iii} If a matroid in $\mathcal{M}_{\mathcal{P}}$ has a multilinear representation of order $c$ over a field $\F$, then $\mathcal{P}$ has a solution in $M_c(\F)$, with all matrices invertible or $0$.
	\end{enumerate}
\end{theorem}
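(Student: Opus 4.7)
My plan is to translate directly between the algebra of $\mathcal{P}$ and the projective geometry of $D\mathbb{P}^2$ (or its multilinear analogue), constructing a representation in one direction and extracting a solution in the other.

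For part~(i), given a solution $(d_0,\ldots,d_N)$ in $D$ with $d_0=0$ and $d_1=1$, I build an explicit representation $\phi\colon E_\mathcal{P}\to D\mathbb{P}^2$. Place the frame canonically at $O=[1:0:0]$, $x_\infty=[0:1:0]$, $y_\infty=[0:0:1]$, and set $\phi(x_i)=[1:d_i:0]$, $\phi(y_i)=[1:0:d_i]$, $\phi(z_i)=[0:-d_i:1]$, and $\phi(r_k)=[1:d_k:1]$ for each element introduced by an addition equation. A direct right-multiplication calculation shows that every axis circuit and every equation circuit of $M_\mathcal{P}$ maps to a collinear triple: for instance, the line through $\phi(x_i)$ and $\phi(y_k)$ meets the line at infinity at the point $[0:-d_j:1]=\phi(z_j)$ precisely when $d_i=d_jd_k$, and a parallel computation handles $\{\phi(x_i),\phi(r_k),\phi(z_j)\}$. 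The induced matroid $M'$ is loop-free, its restriction to the frame is canonical, and $[1:d_i:0]$ is never parallel to $[0:1:0]$, so $M'\in\mathcal{M}_\mathcal{P}$.

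For part~(ii), given a representation of some $M'\in\mathcal{M}_\mathcal{P}$ over $D$, I apply \Cref{lem:proj_equiv} to bring the frame to canonical position. Condition~(3) then ensures each $\phi(x_i)=[1:\alpha_i:0]$ has a well-defined coordinate $\alpha_i\in D$, and I set $d_i:=\alpha_i$. For each addition equation the two defining circuits for $r_k$ force $\phi(r_k)=[1:\alpha_k:1]$, and the third circuit $\{x_i,r_k,z_j\}$, together with a normalization of the representative of $\phi(z_j)$ on the line at infinity, yields $d_i=d_j+d_k$. Multiplication is similar, using $\{x_i,y_k,z_j\}$ and the residual scaling freedom on the line at infinity to identify the invariant coming from $z_j$ with $d_j$. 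This produces the desired solution $(d_0,\ldots,d_N)$ of $\mathcal{P}$ in $D$.

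For part~(iii), I run the same argument in the multilinear setting of \Cref{rem:2.4_multilinear}: $D\mathbb{P}^2$ is replaced by the Grassmannian of $c$-subspaces of $\F^{3c}$ and each scalar coordinate becomes a $c\times c$ matrix over $\F$. The requirement that sums of subspaces have dimension a multiple of $c$, combined with condition~(3), forces each $x_i$ either to coincide with $O$ (so the associated block $d_i=0$) or to be transverse to both $O$ and $x_\infty$ inside the $2c$-dimensional $x$-axis. In the second case $x_i$ is the graph of an isomorphism $O\to x_\infty$, and the extracted matrix $d_i$ is invertible, which is exactly the promised dichotomy. The main obstacle will be in parts~(ii) and~(iii): the collinearity relations a priori couple independent coordinate invariants of $x_i,y_k,z_j$, while the equations of $\mathcal{P}$ only involve the single values $d_i$. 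Aligning these requires careful use of the scaling freedom on the line at infinity and of the identities $d_0=0$, $d_1=1$ inherited from the canonical frame, carried out uniformly across all atomic equations.
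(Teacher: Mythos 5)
Your overall strategy coincides with the paper's: part~(i) is the same explicit point configuration (your $[0:-d_i:1]$ is the same projective point as the paper's $[0:a_i:-1]$ under right scaling), part~(ii) normalizes the frame via \Cref{lem:proj_equiv} and reads the equations off the circuits, and part~(iii) runs the same argument in the Grassmannian, with your transversality dichotomy for $x_i$ matching the paper's observation that $\mathrm{rk}\begin{bsmallmatrix}I_c & I_c\\ 0 & A\\ 0&0\end{bsmallmatrix}$ is a multiple of $c$ only when $A$ is zero or invertible.

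There is, however, a genuine gap exactly at the point you flag as ``the main obstacle,'' and the mechanism you propose for closing it does not work. After the frame is normalized, the representation assigns three \emph{a priori independent} coordinates to each index $i$: write $x_i=[1:a_i:0]$, $y_i=[1:0:a_i']$, $z_i=[0:a_i'':-1]$. The multiplication circuit $\{x_i,y_k,z_j\}$ then yields $a_i=a_j''\cdot a_k'$, and the addition circuit yields $a_i=a_j''+a_k$; to obtain a solution of $\mathcal{P}$ you must show $a_i=a_i'=a_i''$ for every $i$. ``Scaling freedom on the line at infinity'' cannot achieve this: once the frame is fixed, each of $a_i,a_i',a_i''$ is a well-defined projective invariant, and choosing a different representative of $z_j$ in $D^3$ changes nothing. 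The identities $d_0=0$, $d_1=1$ alone do not suffice either. What is actually needed — and what the paper uses — is that $M_{\mathcal P}$ contains the calibration circuits $\{x_i,y_i,z_1\}$ and $\{x_i,y_1,z_i\}$ (the circuits of the identity relations $X_i=X_1\cdot X_i$ and $X_i=X_i\cdot X_1$); the same $3\times 3$ dependence computation applied to these two circuits forces $a_i=a_i'$ and $a_i=a_i''$, after which the equation circuits give exactly the atomic relations. Without invoking these specific circuits your extraction step in parts~(ii) and~(iii) does not go through. The rest of the argument, including the invertible-or-zero conclusion in~(iii), is sound.
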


The assumption that $\mathcal{P}$ is atomic can be dropped: given a general system of equations, we can apply~\Cref{lem:atomic} to reduce to the atomic case.

\begin{proof}[Proof of~\Cref{thm:VS}~\eqref{thm:i}]
Let $D$ be any division ring.
Suppose the substitution $X_{0}=0,X_{1}=a_1=1,X_{2}=a_{2},\ldots,X_{N}=a_{N}$
is a solution of $\mathcal{P}$ for some $a_1,\dots,a_N\in D$.
Consider the following map $\rho$ from the
ground set of $M_{P}$ to $D\mathbb{P}^{2}$:
\begin{align*}
O\mapsto&\left[1:0:0\right],\\
x_{i}\mapsto&\left[1:a_{i}:0\right] \text{ for all } 1\le i \le N,\\
y_{i}\mapsto&\left[1:0:a_{i}\right]\text{ for all } 1\le i \le N,\\
x_{\infty}\mapsto&\left[0:1:0\right],\\
y_{\infty}\mapsto&\left[0:0:1\right],\\
z_{i}\mapsto&\left[0:a_i:-1\right]\text{ for all } 1\le i \le N,\\
r_{i}\mapsto&\left[1:a_{i}:1\right]\text{ for all } 1\le i \le N.
\end{align*}

We show that this map represents some matroid in $\M_{\mathcal{P}}$. 
First we need to verify that the conditions described in \Cref{def:vS_matroid} hold.

The matroid represented by $\rho$ is loop-free (there is no ``$0$ point'' in $D\mathbb{P}^2$). Similarly, no $\rho(x_i)$ is parallel to $\rho(x_\infty)$ (for $i\neq\infty$), because $\rho(x_\infty)$ is on the line at infinity and other $\rho(x_i)$ are not. The restriction of $\rho$ to $\{O,x_1,y_1,x_\infty,y_\infty\}$ represents the same matroid as the restriction of $M_\mathcal{P}$ to this subset: it is easy to see the images of the five points are distinct, and that the lines spanned by $\{\rho(x_1),\rho(x_\infty)\}$ and $\{\rho(y_1),\rho(y_\infty)\}$ meet at $\rho(O)$.

What remains is to show that the images of circuits of $M_\mathcal{P}$ are dependent.
This is trivial for triples of elements all lying on the flat spanned by $O$ and $x_{\infty}$ or $O$ and $y_{\infty}$ or $x_\infty$ and $y_{\infty}$. In all three cases the resulting subsets are of size at least $3$, and lie on a projective line.

Now consider a triple $\{x_i,y_k,z_j\}$ for some indices $1\le i,k,j\le N$. By construction, such a triple is a circuit of $M_P$ only in the following situations.
\begin{enumerate}
	\item $i=k$ and $j=1$,
	\item $i=j$ and $k=1$,
	\item The equation $X_i = X_j \cdot X_k$ is in $\mathcal{P}$.
\end{enumerate}
Note that in all three cases it is true by definition that $a_i = a_j \cdot a_k$.

Taking representatives in $D^3$ for $\rho(x_i),\rho(y_k),\rho(z_j)$ and writing them as columns, 
we have a dependence
\[\begin{bmatrix}
1 \\ a_i \\ 0
\end{bmatrix} \cdot 1 + 
\begin{bmatrix}
1 \\ 0 \\ a_k
\end{bmatrix} \cdot (-1) +
\begin{bmatrix}
0 \\ a_j \\ -1
\end{bmatrix} \cdot a_k = 
\begin{bmatrix}
0 \\ a_i - a_j \cdot a_k \\ 0
\end{bmatrix} = 0.
\] 

Now suppose $\mathcal{P}$ contains the equation $X_i = X_j + X_k$ for some indices $1\le i,j,k\le N$.
In this case $M_\mathcal{P}$ contains the circuits $\left\{ y_{1},r_{k},x_{\infty}\right\} $, $\left\{ x_{k},r_{k},y_{\infty}\right\} $, and $\left\{ x_{i},r_{k},z_{j}\right\}$.
The images under $\rho$ of each of these circuits are dependent, because
\[\begin{bmatrix}
1 \\ 0 \\ 1
\end{bmatrix} \cdot 1 + 
\begin{bmatrix}
1 \\ a_k \\ 1
\end{bmatrix} \cdot (-1) +
\begin{bmatrix}
0 \\ 1 \\ 0
\end{bmatrix} \cdot a_k = 0,
\] 

\[\begin{bmatrix}
1 \\ a_k \\ 0
\end{bmatrix} \cdot 1 + 
\begin{bmatrix}
1 \\ a_k \\ 1
\end{bmatrix} \cdot (-1) +
\begin{bmatrix}
0 \\ 0 \\ 1
\end{bmatrix} \cdot a_k = 0,
\]
and

\[\begin{bmatrix}
1 \\ a_i \\ 0
\end{bmatrix} \cdot 1 + 
\begin{bmatrix}
1 \\ a_k \\ 1
\end{bmatrix} \cdot (-1) +
\begin{bmatrix}
0 \\ a_j \\ -1
\end{bmatrix} \cdot (-1) = 0,
\]
where in the last equality we used the assumption $a_i = a_j + a_k$.

It is also clear that any subset of size $4$ of the image of $\rho$ is dependent.
\end{proof}

\begin{proof}[Proof of~\Cref{thm:VS}~\eqref{thm:ii}]
Let $\rho$ be a representation of a member of $\mathcal{M_P}$ in a division ring~$D$.
Then by \Cref{lem:proj_equiv}, there exists a projective transformation which takes $O,x_{1},y_{1},x_{\infty},y_{\infty}$
to the elements $[1:0:0]$, $[1:1:0]$, $[1:0:1]$, $[0:1:0]$, and $[0:0:1]$ in $D\mathbb{P}^{2}$, respectively.
By composing with such a transformation, we may assume $\rho$ maps $O,\ldots,y_\infty$ to $[1:0:0],\ldots,[0:0:1]$.

This ensures that for all $1\le i\le N$, $\rho$ maps
\[
	x_i \mapsto [1:a_i:0], \quad y_i \mapsto [1:0:a_i'], \quad z_i \mapsto [0: a_i'':-1],
\]
for some suitable elements $a_i,a_i',a_i''\in D$.

Since $\rho$ represents some matroid in $\M_{\mathcal{P}}$, it maps each circuit of $M_\mathcal{P}$ to a set of rank at most $2$.

Consider the circuit $\{x_i,y_i,z_1\}$ for some $i$, and take representatives in $D^3$ for its image under $\rho$:
\[
\begin{bmatrix}1 \\ a_i \\ 0\end{bmatrix},\quad \begin{bmatrix}1 \\ 0 \\a_i'\end{bmatrix},\quad \begin{bmatrix} 0 \\ 1 \\ -1 \end{bmatrix}.
\]
A dependence between these columns is a solution $\lambda_1,\lambda_2,\lambda_3\in D$ to the equation

\[ \begin{bmatrix}
1 & 1 & 0 \\
a_i & 0 & 1 \\ 
0 & a_i' & -1
\end{bmatrix} \cdot 
\begin{bmatrix}\lambda_1 \\ \lambda_2 \\ \lambda_3 \end{bmatrix} = 0.\]
It is clear that any solution must have $\lambda_1 = -\lambda_2$ and $\lambda_3 = a_i' \cdot \lambda_2$. Thus any nonzero solution has $\lambda_1 \neq 0$, and we may without loss of generality take $\lambda_1 = 1,\lambda_2=-1$, and $\lambda_3=-a_i'$. The columns are known to be dependent, which implies (by considering the middle row of the matrix equation above) that $a_i = a_i'$.

Applying the same considerations to circuits of the form $\{x_i,y_1,z_i\}$ shows $a_i = a_i''$ for each $i$.

Now we claim that substituting $X_0=0$, $X_1=a_1=1$, and $X_i=a_i$ yields a solution to the atomic equations in $\mathcal{P}$.
The proof is essentially the same as the proof that $a_i = a_i'$.

Suppose an equation $X_i=X_j\cdot X_k$ is in $\mathcal{P}$.
By assumption, the matroid represented by $\rho$ contains the circuit $\left\{ x_{i},y_{k},z_{j}\right\} $ in this case. Taking representatives for $\rho(x_i),\rho(y_k),$ and $\rho(z_j)$ in $D^3$ and using the existence of a nontrivial dependence, we have

\[\begin{bmatrix}
1 \\ a_i \\ 0
\end{bmatrix} \cdot \lambda_1 + 
\begin{bmatrix}
1 \\ 0 \\ a_k
\end{bmatrix} \cdot \lambda_2 +
\begin{bmatrix}
0 \\ a_j \\ -1
\end{bmatrix} \cdot \lambda_3 = 0
\] 
where $\lambda_1,\lambda_2,\lambda_3$ are not all $0$. Considering the first row we see $\lambda_1 = -\lambda_2$, and considering the last row we see $\lambda_3 = a_k \cdot \lambda_2$. Thus $\lambda_1 \neq 0$ (or the dependence is trivial) so we may assume $\lambda_1 = 1$. Substituting and considering the second row, we obtain $a_i - a_j\cdot a_k = 0$.

Similarly, consider an equation $X_i=X_j+X_k$ in $\mathcal{P}$.
Then, the matroid represented by $\rho$ contains the element $r_{k}$ together with the circuits $\left\{ y_{1},r_{k},x_{\infty}\right\} $, $\left\{ x_{k},r_{k},y_{\infty}\right\} $, and $\left\{ x_{i},r_{k},z_{j}\right\}$.
The first two circuits imply $\rho(r_{k})= [1:a_k:1]$.
In the same way as for a multiplicative equation in $\mathcal{P}$, we obtain $a_i=a_j+a_k$.

Therefore the tuple $(a_1,\dots,a_N)\in D^N$ solves~$\mathcal{P}$.
\end{proof}

The same proof of \Cref{thm:VS}~\eqref{thm:ii} works mutatis mutandis for \Cref{thm:VS}~\eqref{thm:iii}.
The elements in the division ring are replaced by the invertible matrices in $M_c(\F)$ together with the zero matrix. The proof of \Cref{lem:proj_equiv} also works in this setting (see \cref{rem:2.4_multilinear}).

The solution to $\mathcal{P}$ corresponding to a multilinear representation of a matroid $M\in\mathcal{M_P}$ will always have all matrices invertible
or zero, because if a $c\times c$ matrix $A$ is neither $0$ nor
invertible then 
\[
\mathrm{rk}\begin{bmatrix}I_{c} & I_{c}\\
0 & A\\
0 & 0
\end{bmatrix}
\]
is not a multiple of $c$ (the two columns of this block matrix correspond to the images of $O$ and some element $x_i$ in a representation).

\begin{rmrk}
The converse of ~\Cref{thm:VS}~\eqref{thm:i} fails for multilinear representations due to the failure of certain ranks to be multiples
of $c$: for any $c\ge 2$, there are pairs of invertible $c\times c$ matrices $A,A^{\prime}$
such that
\[
\mathrm{rk}\begin{bmatrix}I_{c} & I_{c}\\
A & A^{\prime}\\
0 & 0
\end{bmatrix}\notin c\mathbb{N}.
\]
It is impossible for a solution of
a system $\mathcal{P}$ having $X_{i}=A$ and $X_{j}=A^{\prime}$
to correspond to a multilinear representation of any $M\in\mathcal{M}_{\mathcal{P}}$ when $A,A'$ have this property.
\end{rmrk}

\section{Undecidability in division rings}\label{sec:undec}
We summarize some results of Macintyre on the undecidability of the word problem in division rings, see \cite{Mac73}, and also \cite{Mac79} for refinements.
Using these, it is easy to deduce the next theorem.

\begin{theorem}\label{thm:undecidability}
	The following problems are undecidable.
	\begin{enumerate}
		\item Given a matroid, decide whether it is skew-linear.
		\item Fix $p$ prime or $0$. Given a matroid, decide whether there exists a division ring of characteristic $p$ over which it is representable.
	\end{enumerate}
	Further, there exists a division ring $D$ such that the following problem is undecidable: given a matroid, decide whether it is representable over $D$.
\end{theorem}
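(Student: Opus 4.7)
The plan is to deduce all three undecidability statements from Macintyre's results on the word problem over division rings via the von Staudt construction of \Cref{thm:VS}. Three forms of Macintyre's theorem are needed, one per claim: (a) it is undecidable whether a given finite system $\mathcal{P}$ of polynomial equations with integer coefficients in noncommuting variables admits a solution in some division ring; (b) for each fixed characteristic $p$ (prime or $0$), solvability of $\mathcal{P}$ in some division ring of characteristic $p$ is undecidable; and (c) there is a specific division ring $D$ for which solvability of $\mathcal{P}$ in $D$ is undecidable.

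Given any such system $\mathcal{P}$, the reduction proceeds as follows. First apply \Cref{lem:atomic} to obtain an equivalent atomic system $\mathcal{P}'$. Next, compute the family $\mathcal{M}_{\mathcal{P}'}$ of von Staudt matroids: its ground set and the collection $M_{\mathcal{P}'}$ of prescribed circuits are written down directly from $\mathcal{P}'$, and the matroidal weak images satisfying \Cref{def:vS_matroid} can be enumerated in finite time. By \Cref{thm:VS}, $\mathcal{P}'$ has a solution in a division ring $D$ if and only if at least one member of $\mathcal{M}_{\mathcal{P}'}$ is representable over $D$; the same biconditional therefore holds for $\mathcal{P}$, since the two systems are equivalently solvable in every division ring.

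Each undecidability claim now follows by a Turing reduction. For item (1), a hypothetical decision procedure for skew-linearity, applied to each of the finitely many members of $\mathcal{M}_{\mathcal{P}'}$ and returning ``yes'' iff at least one test succeeds, would decide whether $\mathcal{P}$ is solvable over some division ring, contradicting (a). Item (2) is identical with ``skew-linear'' replaced by ``representable over some division ring of characteristic $p$'' and (a) replaced by (b). The final statement uses (c): a decision procedure for $D$-representability, applied to each $M \in \mathcal{M}_{\mathcal{P}'}$, would decide solvability of $\mathcal{P}$ in $D$. The only real concern is bibliographic---locating the precise refinements (b) and (c) inside \cite{Mac73}---since the finiteness and effective enumeration of $\mathcal{M}_{\mathcal{P}'}$ are exactly what convert the existential quantifier over the family into a finite search.
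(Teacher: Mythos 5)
Your reduction from ``solvability of a polynomial system over (a class of) division rings'' to ``representability of at least one member of a finite, computable family of von Staudt matroids'' is exactly the paper's argument (it is packaged there as \Cref{lem:horn}), and that part is fine: the finiteness and computability of $\mathcal{M}_{\mathcal{P}'}$, together with \Cref{thm:VS} and \Cref{lem:atomic}, do convert the existential quantifier into a finite search, and a decision procedure for representability would then decide solvability.

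The gap is in the algebraic input, which you dismiss as ``bibliographic.'' None of your statements (a)--(c) is literally a form of Macintyre's theorem. What \cite{Mac73} gives is: from a universal Horn sentence $H$ in the multiplicative language of group theory one can compute a universal Horn sentence $H'$ such that $H$ is true in groups iff $H'$ is true in all division rings (also in all division rings of any fixed characteristic). To get your (a) and (b) from this you must (i) start from the undecidability of the \emph{uniform word problem for groups}, i.e.\ truth of Horn sentences in groups; (ii) convert ``$H'$ is false in some $D\in\mathcal{D}$'' into solvability of a genuine polynomial system, which requires negating the implication $A=B$ by adjoining a fresh variable $y$ and the equation $(A-B)y=1$; and (iii) eliminate the inverses appearing in the Horn terms under Macintyre's convention $0^{-1}=0$, which the paper does by a $2^n$-fold case split over which variables vanish (adding $x_i=0$ or $x_ix_i'=1$ accordingly), producing a finite \emph{family} of systems rather than one. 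Your statement (c) is the furthest from a citation: the existence of a single division ring $D$ over which representability is undecidable comes from fixing one finitely presented group $G$ with undecidable word problem, observing that the resulting Horn sentences all share the same hypotheses $r_i=1$ and differ only in the conclusion $w=1$, and using the fact that Macintyre's construction of the witnessing division ring $D_p$ depends only on the equations of $H'$, not on its implication. That is a short but essential argument, not a reference lookup, and without it the third claim of the theorem is unproved.
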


This is similar to the multilinear case -- cf.~\cite{KY19}. For linear matroid representations the situation is simpler: there is an algorithm to decide whether a given matroid is linear, and also an algorithm to decide whether a matroid is linear over a fixed finite or algebraically closed field. The linear characteristic set is also computable, see for instance~\cite[Theorem 6.8.14]{Oxl11}.

\subsection{Macintyre's results}
This section summarizes part of \cite{Mac73} for the reader's convenience.

Let $L$ be the language of group theory, having the constant $1$ and the functions $\cdot, ^{-1}$. Macintyre also uses it for the multiplicative theory of division rings, with the additional convention $0^{-1} = 0$ (this is used only to make sure $^{-1}$ is always defined, and $0$ plays no role).

A universal Horn sentence in $L$ is an expression of the form:
\[\forall x_1 \ldots \forall x_n \left(\bigwedge_{i=1}^m A_i(x_1,\ldots,x_n)=B_i(x_1,\ldots,x_n)\right) \implies A(x_1,\ldots,x_n) = B(x_1,\ldots,x_n)\]
where each expression of the form $A_i(x_1,\ldots,x_n)$ (and similarly $B_i, A, B$) is a product of some sequence of the variables $x_1,\ldots,x_n$ and their inverses. We will call the equations $A_i = B_i$ (excluding $A=B$) the \emph{equations} of the Horn sentence, $A=B$ its \emph{implication}. 

A Horn sentence in $L$ is true in division rings if it is true in every division ring. It is true in groups if it is true in every group.

Given a universal Horn sentence $H$ in $L$, the main theorem of \cite{Mac73} produces a universal Horn sentence $H'$ in $L$, computable from $H$, such that $H$ is true in groups if and only if $H'$ is true in division rings.

The proof contains a construction for each prime number $p$ or $0$, of a division ring $D_p$ of characteristic $p$ with the following property: $D_p$ has elements $\bar{x_1},\ldots,\bar{x_n}$ satisfying the equations of $H'$, and $\bar{x_1},\ldots,\bar{x_n}$ satisfy the implication of $H'$ if and only if $H'$ is true in division rings.
\subsection{\texorpdfstring{Proof of \Cref{thm:undecidability}}{Proof of the undecidability theorem}}
We begin by noting that the truth of a Horn sentence in division rings can be reduced to a sequence of representation problems for matroids.

\begin{lemm}\label{lem:horn}
	Let $\mathcal{D}$ be a class of division rings. If it is decidable whether a matroid is representable over at least one member of $\mathcal{D}$, then the truth of Horn sentences in $\mathcal{D}$ is decidable.
\end{lemm}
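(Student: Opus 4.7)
The plan is to reduce the truth of the given Horn sentence $H$ in $\mathcal{D}$ to a finite sequence of representability questions for matroids over members of $\mathcal{D}$, and then invoke the hypothesis. The key intermediate step is to translate ``$H$ fails in some $D \in \mathcal{D}$'' into ``some explicit atomic polynomial system is solvable in some $D \in \mathcal{D}$,'' after which \Cref{lem:atomic} and \Cref{thm:VS} apply directly.

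First, I would rewrite the failure of $H$ as a finite polynomial system $\mathcal{S}$ in noncommuting variables with integer coefficients. For each variable $x_j$ whose inverse occurs in $H$, introduce a fresh variable $y_j$ together with the two pseudo-inverse equations $x_j y_j x_j = x_j$ and $y_j x_j y_j = y_j$. In any division ring under Macintyre's convention $0^{-1}=0$, these force $y_j$ to be the convention-inverse of $x_j$: if $x_j \neq 0$, multiplying the first equation by $x_j^{-1}$ on the right yields $x_j y_j = 1$ and hence $y_j = x_j^{-1}$; if $x_j = 0$, the second equation reads $0 = y_j$. Substituting $y_j$ for $x_j^{-1}$ turns every equation $A_i = B_i$ of $H$ into a polynomial identity in the $x_j$ and $y_j$. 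To express the failure of the implication $A = B$, I add one further variable $w$ and the equation $(A - B)\, w = 1$; this is solvable for $w$ exactly when $A - B$ is a unit, i.e., nonzero. Assembling, the system $\mathcal{S}$ is solvable in $D$ if and only if $H$ fails in $D$.

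Next, I apply \Cref{lem:atomic} to $\mathcal{S}$ to obtain an equivalent atomic system $\mathcal{P}$ with the same solvability over each $D$, and then parts \eqref{thm:i} and \eqref{thm:ii} of \Cref{thm:VS} to produce the finite, explicitly computable family $\mathcal{M}_{\mathcal{P}}$ of matroids such that $\mathcal{P}$ is solvable in $D$ iff at least one $M \in \mathcal{M}_{\mathcal{P}}$ is representable over $D$. Quantifying over $\mathcal{D}$, it follows that $H$ holds throughout $\mathcal{D}$ precisely when no $M \in \mathcal{M}_{\mathcal{P}}$ is representable over any member of $\mathcal{D}$; this condition is decidable by hypothesis, by running the assumed procedure on each of the finitely many $M$. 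The only non-mechanical design choice is the pseudo-inverse encoding, which is needed to avoid a case split on which $x_j$ vanish and to match Macintyre's convention; everything else is a straightforward invocation of the machinery of \Cref{sec:VS}.
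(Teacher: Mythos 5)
Your argument is correct and follows the paper's overall strategy --- encode the failure of $H$ as a noncommutative polynomial system, pass to an atomic system via \Cref{lem:atomic}, and use \Cref{thm:VS} to turn solvability in some $D\in\mathcal{D}$ into representability of one of finitely many computable von Staudt matroids over some $D\in\mathcal{D}$ --- but it handles the convention $0^{-1}=0$ in a genuinely different way. The paper splits into $2^n$ cases indexed by the set $S$ of variables assumed to vanish, adding $x_i=0$ and substituting $x_i$ for $x_i^{-1}$ when $i\in S$, adding $x_i x_i'=1$ when $i\notin S$, and then testing every matroid in the union $\bigcup_{S\subset[n]}\mathcal{M}_S$. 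You instead impose the pseudo-inverse equations $x_j y_j x_j = x_j$ and $y_j x_j y_j = y_j$, which in a division ring force $y_j$ to equal the convention-inverse of $x_j$ in both the zero and nonzero cases, and which are always satisfiable by that choice; this yields a single system and a single family $\mathcal{M}_{\mathcal{P}}$. Your verification of both directions (any solution has $y_j=x_j^{-1}$ or $y_j=0=x_j$, and the convention-inverse always supplies a solution) is exactly what is needed for the equivalence ``$\mathcal{S}$ is solvable in $D$ if and only if $H$ fails in $D$,'' and the treatment of the implication via $(A-B)w=1$ is identical to the paper's. The trade-off is minor: the paper's case split is more pedestrian but each resulting system mentions only genuine inverses, whereas your encoding avoids the exponential proliferation of systems at the cost of a slightly subtler correctness argument. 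Since only finiteness and computability of the resulting matroid family matter for decidability, both routes establish the lemma.
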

This lemma will be applied with $\mathcal{D}$ equal to either a single division ring, the class of all division rings, or the class of division rings having a fixed characteristic.
\begin{proof}
 Given a Horn sentence $H$ in $L$ of the form:
\[\forall x_1 \ldots \forall x_n \left(\bigwedge_{i=1}^m A_i(x_1,\ldots,x_n)=B_i(x_1,\ldots,x_n)\right) \implies A(x_1,\ldots,x_n) = B(x_1,\ldots,x_n),\]
the equations $A_i = B_i$ for $i=1,\ldots,m$ form a system of equations in the variables $x_1,\ldots,x_n$ and their inverses. To this system we add a variable $y$ together with the equation:
\[(A(x_1,\ldots,x_n) - B(x_1,\ldots,x_n))y = 1,\]
to ensure $A = B$ does not hold. 

This yields a system of equations in $L$ which can be solved in at least one division ring $D \in \mathcal{D}$ if and only if $H$ is false in $\mathcal{D}$.

To handle the convention $0^{-1}=0$, we split into $2^n$ cases, one for each subset of the variables. 

Let $S \subset [n]$ (these will be the variables with value $0$). For each $i\in S$, replace each occurrence of $x_i^{-1}$ by $x_i$ and add the equation $x_i = 0$ to the system.
By adding equations of the form
\[x_i x_i' = 1\]
for all $i\notin S$ and replacing each occurrence of $x_i^{-1}$ with $x_i'$ for such $i$, we obtain a polynomial system in $2n-|S|$ noncommuting variables.

Let $\mathcal{M}_S$ be the family of von Staudt matroids associated with this system. This is a finite family which can be computed from $H$. It is clear that some element $M\in \mathcal{M}_S$ is skew-linear over some division ring $D \in \mathcal{D}$ if and only if $H$ is false in $\mathcal{D}$ and there is a counterexample with $\{i\in[n] \mid x_i = 0\}=S$.

Thus, at least one matroid in $\bigcup_{S\subset[n]}\mathcal{M}_S$ is representable over at least one element of $\mathcal{D}$ if and only if $H$ is false in $\mathcal{D}$. Since $\bigcup_{S\subset[n]}\mathcal{M}_S$ is finite and computable from $H$, this proves the lemma.
\end{proof}

The proof of the theorem is now straightforward:

By Macintyre's results, there is a Horn sentence $H'$, computable from $H$, which is true in all division rings if and only if $H$ is true in all groups. This remains true when restricting to division rings of a fixed characteristic. The truth of a Horn sentence in groups is undecidable (this is the uniform word problem). Together with \Cref{lem:horn}, this proves the first two parts of~\Cref{thm:undecidability}.

Let $G=\langle s_1,\ldots,s_n | r_1,\ldots,r_k \rangle$ be a finitely presented group in which the word problem is undecidable. Let $w=w(s_1,\ldots,s_n)$ denote a word in the generators of $G$. Then $w=e$ in $G$ if and only if the Horn sentence $H$ given by
\[\forall s_1 \ldots\forall s_n \left(\bigwedge_{i=1}^k r_i(s_1,\ldots,s_n) = 1\right) \implies w(s_1,\ldots,s_n) = 1\]
is true in groups. By the choice of $G$, there is no algorithm which decides whether $w=e$ as $w$ ranges over all words in the generators. In other words, consider the collection of all Horn sentences with the same equations $r_i(s_1,\ldots,s_n)=1$ as $H$: there is no algorithm to decide whether a sentence from this collection is true in groups.

Using this, we prove the third claim of \Cref{thm:undecidability}: Macintyre's results produce a single division ring $D$, dependent only on the equations $r_i=1$ of $H$ (and not on $w$), in which $H'$ is true if and only if $H$ is true in groups.
Thus, representability of matroids over $D$ is undecidable.

\section{Multilinear characteristic sets}\label{sec:weyl}
\subsection{Introduction to  characteristic sets}
We begin with a comparison of linear, skew-linear, and multilinear characteristic sets of matroids.
\begin{defn}
	The multilinear characteristic set of a matroid $M$ is the set
	\[\{p \mid M \text{ has a multilinear representation over a field of characteristic } p\}.\]
\end{defn}
The linear and skew-linear characteristic sets are defined analogously.

Linear characteristic sets of matroids are well-understood: if $M$ is a matroid, its linear characteristic set is either finite or cofinite.
The latter case occurs if and only if the set contains $0$.
Any finite set of primes is the linear characteristic set of a matroid, and is also complementary to some linear characteristic set. These results are explained in \cite[Section 6.8]{Oxl11}, which refers to Rado, V\'amos, Reid, and Kahn \cites{Rad57}{Vam71}[p.101-2]{BK80}{Kahn82}.

Skew-linear characteristic sets are more complicated: they need not be finite or cofinite, and there is a matroid with skew-linear characteristic set $\{0\}$.
Like in the linear case, an infinite skew-linear characteristic set must contain $0$: this follows from the compactness theorem of first-order logic. See~\cite[Section 3.4]{EH91} for both of these results.

The focus of this article lies on multilinear characteristic sets which are different from both of these.
Like in the linear situation, if a multilinear characteristic set contains $0$ then it is cofinite.
 However, there exists a matroid with multilinear characteristic set equal to the set of all prime numbers, that is, only $0$ is excluded.

We do not yet know whether there exists an infinite multilinear characteristic set with an infinite complement.

\begin{lemm}\label{lem:char_set}
	Let $M$ be a matroid. If the multilinear characteristic set of $M$ contains $0$ then it contains all but finitely many primes.
\end{lemm}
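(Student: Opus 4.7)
The plan is a reduction-mod-$p$ argument. Suppose $M$ has a multilinear representation of order $c$ over a field $\F$ of characteristic zero, given by a matrix $B \in M_{rc,cn}(\F)$. The condition that $B$ represents $M$ amounts to finitely many polynomial conditions on its entries: for every subset $S \subseteq E$ of rank $t = \rk_M(S)$, the block submatrix of $B$ corresponding to $S$ must have rank exactly $ct$.

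For each such $S$, fix a nonzero $ct \times ct$ minor $\alpha_S$ of the $S$-block submatrix witnessing the rank lower bound, while every $(ct+1) \times (ct+1)$ minor of that block submatrix automatically vanishes in $\F$. Set $\alpha = \prod_S \alpha_S \in \F \setminus \{0\}$, and let $R \subseteq \F$ be the $\Z$-subalgebra generated by the entries of $B$ together with $\alpha^{-1}$. Then $R$ is a finitely generated $\Z$-algebra and, being a subring of a characteristic zero field, is an integral domain with $\Z \hookrightarrow R$.

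The next step is to invoke the standard fact that for a finitely generated $\Z$-algebra $R$ which is a domain of characteristic zero, the set of rational primes $p$ with $R/pR = 0$ is finite; this is a consequence of Noether normalization over $\Z$ together with Chevalley's theorem on constructibility of images of morphisms of finite type. For any prime $p$ outside this exceptional set, pick a maximal ideal $\mathfrak{m} \subset R$ containing $p$; by the Nullstellensatz for finitely generated $\Z$-algebras the residue field $K := R/\mathfrak{m}$ is a finite extension of $\F_p$, and in particular has characteristic $p$.

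Finally, let $\tilde B \in M_{rc,cn}(K)$ denote the entrywise reduction of $B$ modulo $\mathfrak{m}$. By construction each $\alpha_S$ is a unit in $R$, so its image in $K$ is nonzero; the required $ct \times ct$ minors of $\tilde B$ therefore do not vanish. On the other hand, the $(ct+1) \times (ct+1)$ minors that vanished in $\F \supseteq R$ still vanish in the quotient $K$. Hence every rank condition required of a multilinear representation of $M$ of order $c$ is preserved, and $\tilde B$ realizes $M$ as a $c$-arrangement over $K$. The main technical obstacle is the third step: showing that only finitely many rational primes can be inverted in a finitely generated $\Z$-subdomain of a characteristic zero field. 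This is standard algebraic geometry, but it is the substantive content of the argument.
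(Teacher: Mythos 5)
Your argument is correct and is essentially the paper's proof: the paper encodes the order-$c$ representations as solutions of an integer polynomial system and cites the standard constructibility/Chevalley fact that such a system solvable in characteristic $0$ is solvable in all but finitely many positive characteristics. You simply unpack that black box concretely (localize the $\Z$-algebra generated by the entries at the product of witnessing minors, note only finitely many primes become invertible, and reduce modulo a maximal ideal over a remaining prime), which is a valid and complete instantiation of the same reduction-mod-$p$ idea.
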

\begin{proof}
	Suppose $M$ has a multilinear representation of order $c$ over a field of characteristic~$0$. Then, just like in the linear case, the multilinear representation matrices of $M$ of the same order $c$ are the solutions of a certain polynomial system with coefficients in $\Z$. The set of characteristics over which such a system can be solved is always finite or cofinite, and contains all but finitely many primes if it contains $0$: this is a theorem in commutative algebra and can be proved for instance using Chevalley's theorem on constructible sets, see~\cite[Appendix B]{BV03} for details.
\end{proof}
The proof can be used to show part of the characterization for linear characteristic sets. It also shows that if $M$ has a multilinear characteristic set which does not contain $0$, then its multilinear representations of any fixed order have only finitely many characteristics.

\subsection{An infinite multilinear characteristic set excluding $0$}
We construct an explicit example of such a characteristic set using the Weyl algebra.
The \emph{Weyl algebra} $W$ over a field~$\F$ is defined to be the free algebra over $\F$ generated by the two elements $X$ and $Y$, modulo the two-sided ideal generated by the element $XY-YX-1$.
It is an Ore domain and admits a division ring of fractions, which we denote by $\widehat{W}$, see\ \cite[Section 6.1]{Coh95} for details.

As detailed in~\Cref{lem:atomic}, there is an atomic system of equations that has a solution in an arbitrary division ring $D$ if and only if the Weyl algebra equation $XY-YX=1$ has a solution in $D$.
In this case, the equations are
\[
X_0=0, \quad X_1= 1, \quad X_2 \cdot X_3 = X_4, \quad X_3 \cdot X_2 = X_5, \quad	X_1 + X_5 = X_4.
\]
We denote the system of these equations by $\mathcal{P}_W$.
Given a solution in a division ring or invertible matrices to the system $\mathcal{P}_W$, setting $X=X_2$ and $Y=X_3$ is a solution to the equation $XY-YX=1$.

\begin{defn}\label{def:matroid}
	We define the \emph{Weyl matroid} $M_W$ to be the principal von Staudt matroid of the atomic system of equations $\mathcal{P}_W$.
\end{defn}
It can be checked that the circuits defining $M_W$ satisfy the circuit elimination axiom and~$M_W$ is therefore actually a matroid in this case.

Part of the Weyl matroid is drawn in~\Cref{fig:weyl_matroid}: the circuits of the equations ${\color{darkgreen}X_2\cdot X_3=X_4}$, $\color{blue}X_3\cdot X_2 = X_5$ and $\color{red}X_1+X_5=X_4$ are depicted as curves and segments.

\begin{theorem}\label{thm:weyl_algebra}
	\begin{enumerate}[(i)]
		\item The Weyl matroid $M_W$ is representable over the division ring $\widehat{W}$.
		\item The Weyl matroid $M_W$ is not multilinear over any field of characteristic $0$.
		Furthermore, the matroid $M_W$ is not multilinear over a field of characteristic $p$ with $p>0$ if the order is not a multiple of $p$.
		\item\label{weyl_thm:iii} The Weyl matroid $M_W$ is multilinear over a field of characteristic $p$ for any $p>0$ and order $p$.
	\end{enumerate}
	In particular, the multilinear characteristic set of $M_W$ is the set of all prime numbers.
\end{theorem}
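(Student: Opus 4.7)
For part (i), I would apply \Cref{thm:VS}~\eqref{thm:i} to the atomic system $\mathcal P_W$ using the solution $X_2=X$, $X_3=Y$, $X_4=XY$, $X_5=YX$ inside the Weyl algebra, which embeds in its division ring of fractions $\widehat W$. The theorem immediately produces a representation $\rho$ over $\widehat W$ of some matroid in $\mathcal M_{\mathcal P_W}$. To conclude that this matroid is $M_W$ itself (rather than a proper weak image), I would verify that no noncircuit triple of $M_W$ maps to a linearly dependent set in $\widehat W^3$. The required checks reduce to the nonvanishing of a short list of noncommutative polynomial expressions in $X$ and $Y$ (for example $X - Y^2$, $X - Y$, $1 - XY$), which hold because $X$ and $Y$ satisfy no polynomial identities in $\widehat W$ beyond those implied by $XY - YX = 1$.

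For part (ii), I would assume $M_W$ admits a multilinear representation of order $c$ over a field $\mathbb F$ of characteristic $p$. Since $M_W$ lies in $\mathcal M_{\mathcal P_W}$, \Cref{thm:VS}~\eqref{thm:iii} provides a solution of $\mathcal P_W$ in $M_c(\mathbb F)$ whose nonzero entries are invertible matrices; in particular $X_2, X_3 \in M_c(\mathbb F)$ are invertible with $X_2 X_3 - X_3 X_2 = I_c$. Taking the trace yields $c \cdot 1_{\mathbb F} = 0$, which is absurd in characteristic $0$ and forces $p \mid c$ when $p > 0$. This handles both assertions of (ii) in a single line.

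For part (iii), the starting point is the classical fact that in characteristic $p>0$ the Weyl algebra $W$ is an Azumaya algebra of degree $p$ over its center $\mathbb F_p[X^p, Y^p]$: specialising the center at a generic point $(a, b) \in \bar{\mathbb F}_p^\times \times \bar{\mathbb F}_p^\times$ yields a surjection $W \twoheadrightarrow M_p(\mathbb F)$ for a suitable field $\mathbb F$ of characteristic $p$, under which $X$ and $Y$ become invertible matrices with $XY - YX = I_p$. Using these matrices, I would build the multilinear analogue of the representation from the proof of \Cref{thm:VS}~\eqref{thm:i}: each ground-set element is sent to the column span of a $3p \times p$ block matrix whose blocks lie in $\{0, I, X, Y, XY, YX\}$. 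The expected rank-$2p$ dependencies for circuits of $M_W$ follow from the same block computations as in \Cref{thm:VS}~\eqref{thm:i}. Ruling out unintended dependencies reduces to the list of polynomial expressions from (i) remaining invertible in $M_p(\mathbb F)$ after specialisation, which can be arranged by choosing $(a, b)$ sufficiently generic. The main technical obstacle is precisely this last verification: showing that a concrete Azumaya specialisation realises $M_W$ itself rather than a proper weak image. Combining (i)--(iii) then gives the closing assertion about the multilinear characteristic set.
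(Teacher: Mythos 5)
Your proposal is correct and follows essentially the same route as the paper: parts (i) and (ii) are argued identically (apply \cref{thm:VS}~(i) and use that $1,X,Y,XY,YX$ are distinct and satisfy no extra atomic relations in $\widehat{W}$; then the trace of $X_2X_3-X_3X_2=I_c$), and your Azumaya-specialisation argument for (iii) is exactly the paper's explicit computational proof, which takes the central character at the algebraically independent pair $(\lambda,\mu)$ over $\F_p$ to supply precisely the genericity you flag as the remaining verification. The paper also records a slightly slicker variant of (iii) --- transporting the skew-linear representation over the characteristic-$p$ Weyl division ring through its embedding into a matrix algebra over a splitting field, so that the absence of unintended dependencies is inherited from the part (i) argument rather than rechecked after specialisation --- but this is the same idea.
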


\begin{figure}
		\centering
		\includegraphics[width=.6\linewidth]{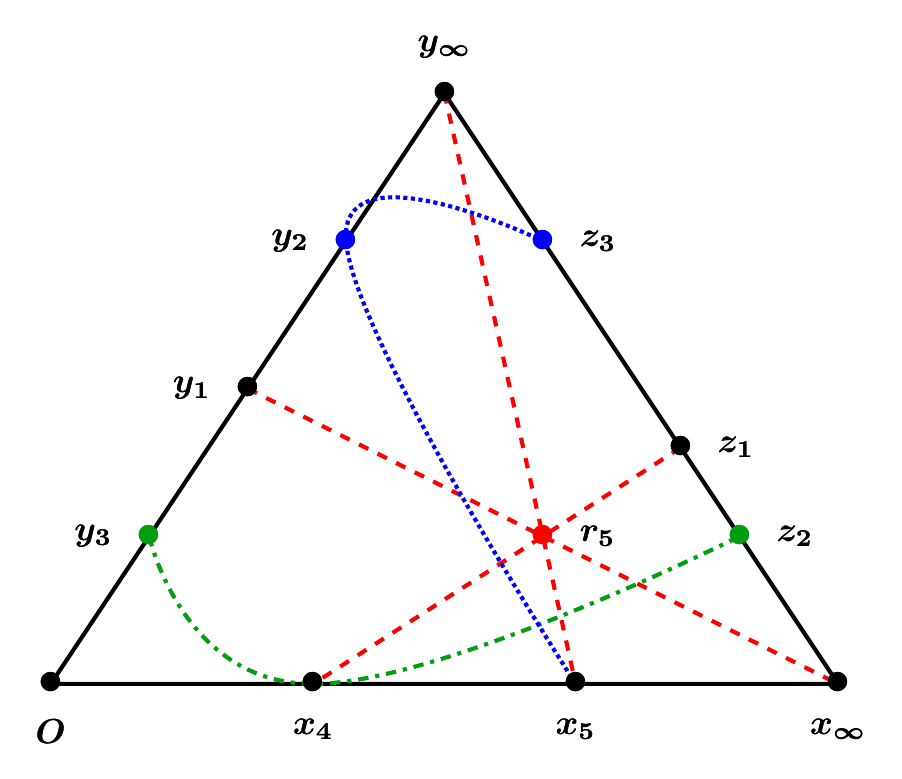} 
	\caption{A geometric representation of a part of the Weyl matroid $M_W$.}
	\label{fig:weyl_matroid} 
\end{figure}

\begin{proof}
	By definition, the equation $XY-YX=1$ has a solution in the Weyl algebra $W$ and therefore also in its division ring of fractions $\widehat{W}$.
	The same holds for its induced atomic system $\mathcal{P}_W$.
	\Cref{thm:VS}~\eqref{thm:i} then implies that some von Staudt matroid of the polynomial system $\mathcal{P}_W$ is representable over the division ring $\widehat{W}$.
	Since $X,Y,XY,YX,1$ are all distinct in~$\widehat{W}$ and do not satisfy any atomic polynomial equation apart from the ones in $\mathcal{P}_W$, this representable matroid over $\widehat{W}$ is in fact the principal von Staudt matroid $M_W$.
	
	Assume $M_W$ is multilinear of order $c$ over some field $\F$.
	\Cref{thm:VS}~\eqref{thm:iii} then implies that there are matrices $A_1,\dots,A_5\in GL_c(\F)\cup \{0\}$ that are a solution to the equations in~$\mathcal{P}_W$.
	Thus, $A_2A_3 - A_3A_2 = I_c$.
	Taking the trace of the matrices in this equation yields
	\[
		c=\tr(I_c)=\tr (A_2A_3-A_3A_2)=\tr(A_2A_3)-\tr(A_3A_2)=0,
	\]
	in $\F$.
	The Weyl matroid $M_W$ is therefore not multilinear of order $c$ for any $c\ge 1$ over any field of characteristic $0$.
	Furthermore, if $\F$ is a field of characteristic $p>0$, the order $c$ must be a multiple of $p$.

	To prove claim~\eqref{weyl_thm:iii}, we note that the division ring of fractions of the Weyl algebra over a field $\F$ of characteristic $p$ is finite-dimensional over its center, and hence embeds in a matrix ring over a field $\mathbb{L}$ extending $\F$. It follows that ranks of matrices over the division ring agree (up to a constant factor $p^2$) with the ranks of corresponding block matrices over $\mathbb{L}$. The rest of the proof is the same as the proof of skew-linearity of the Weyl matroid (using the Weyl algebra over a field of characteristic $p$).
	
	For completeness we also give a detailed computational proof.
	
	We show that $M_W$ is multilinear of order $p$ over the field $\F_p(\lambda,\mu)$, where $p$ is any prime number and $\lambda ,\mu$ are two algebraically independent elements over~$\F_p$.
	Consider the following two $p\times p$ matrices over $\F_p(\lambda,\mu)$:
	\[
	A \coloneqq \begin{bmatrix}
		\lambda & 1 			 & 0 				& \dots & 0 \\
		0 			&	\lambda  & 2 				& \dots & 0 \\
		0			&	 0 			&	\lambda     & \ddots & 0 \\
		\vdots   & \vdots     & \ddots          & \ddots & p-1 \\
		0 			& 0				& 0					& 0 		& \lambda
	\end{bmatrix}, \quad
	B\coloneqq \begin{bmatrix}
	0 			& 0 			 & 0 				& \dots & \mu \\
	1 			&	0			  & 0 				& \dots & 0 \\
	0			&	 1			&	0			     & \dots & 0 \\
	\vdots   & \vdots     & \ddots          & \ddots & \vdots \\
	0 			& 0				& 0					& 1 		& 0
	\end{bmatrix}.
	\]
	A direct computation yields
	\[
	AB\coloneqq \begin{bmatrix}
		1			 & 0 			 & 0 				& \dots & \lambda \mu \\
		\lambda	&	2			  & 0 				& \dots & 0 \\
		0			&	 \lambda &	3     & \ddots & 0 \\
		\vdots   & \vdots     & \ddots          & \ddots & 0 \\
		0 			& 0				& 0					& \lambda		& 0
	\end{bmatrix}, \quad
	BA\coloneqq \begin{bmatrix}
		0			 & 0 			 & 0 				& \dots & \lambda \mu \\
		\lambda	&	1			  & 0 				& \dots & 0 \\
		0			&	 \lambda &	2     & \ddots & 0 \\
		\vdots   & \vdots     & \ddots          & \ddots & 0 \\
		0 			& 0				& 0					& \lambda		& p-1
	\end{bmatrix},
	\]
	which implies $AB-BA=I_p$.
	
	Next consider the following $3\times E_W$ block matrix $W$ of $p\times p$ blocks where $E_W$ is the ground set of $M_W$ and we write $\cdot$ for a $p\times p$ zero matrix for better readability:
	\begin{footnotesize}
	\[
			\begin{bheadmatrix}[
			O & x_{\infty}& y_{\infty}  	 & x_1 	 	& x_2 	 & x_3 		& x_4 	  & x_5   & y_1			& y_2 & y_3 & y_4 & y_5 &  z_1	& z_2  & z_3 & z_4 & z_5 & r_{5}			][*{19}{r}]
			I_p 	& \cdot  & \cdot  	 & I_p     &  I_p   & I_p 	&  I_p    &  I_p	 & I_p 	&  I_p     &  I_p  &  I_p     &  I_p   & \cdot  & \cdot     & \cdot  & \cdot     & \cdot  &   I_p \\
 			\cdot	& I_p  & \cdot & I_p  	 & A 		& B		&  AB  	   & BA  & 	\cdot   & \cdot  & \cdot & \cdot  & \cdot & I_p 	 &  A      &  B   & AB   & BA  & BA	\\
			\cdot & \cdot   & I_p & \cdot   &  \cdot & \cdot  & \cdot  & \cdot& I_p   & A   & B & AB   & BA  &    -I_p    & -I_p   & -I_p  & -I_p   & -I_p & I_p
		  \end{bheadmatrix}.
	\]
\end{footnotesize}

	To show that the matrix $W$ is indeed a multilinear representation of order $c$ of some matroid we need to verify that the rank of all block column minors of $W$ is a multiple of $p$.
	This is trivial to check but slightly tedious.
	Therefore, we omit this part of the proof here.
	In the appendix we prove some parts of this verification as an example.
	The argument relies on the fact that $\lambda$ and $\mu$ are algebraically independent elements over $\F_p$.
	
	As a second step, we claim that $W$ is actually a multilinear representation of the matroid~$M_W$ over $\F_p(\lambda,\mu)$.
	The fact $AB-BA=I_p$ implies that the substitution $X_1=I_p$, $X_2=A$, $X_3=B$, $X_4=AB$, and $X_5=BA$ is a solution to the equations in $\mathcal{P}_W$.
	Further note that the block columns of the matrix $W$ exactly correspond to the images of the map~$\rho$ in the proof of~\Cref{thm:VS}~\eqref{thm:i} after passing from division ring elements to suitable $c\times c$ matrices.
	Thus, the analogous arguments as in this proof show that the multilinear representation given by $W$ respects the circuits of $M_W$ prescribed by the von Staudt constructions.
	That is, the minor of the block columns of a three element circuit of $M_W$ has rank $2p$.
	It can also be verified that the minors of block columns of a triple of $W$ which is not a circuit of $M_W$ has rank $3p$.
	
	Hence, $M_W$ is multilinear over the field $\F_p(\lambda,\mu)$ for all prime numbers $p\ge 2$.
\end{proof}

\section{A skew-linear, nonmultilinear matroid}\label{sec:baumslag}

In this section we prove the following theorem.
\begin{theorem}\label{thm:baumslag}
	There exists a skew-linear matroid which is not multilinear.
\end{theorem}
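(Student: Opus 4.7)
The plan is a direct application of~\Cref{thm:VS}, combined with the algebraic input promised in the introduction as~\Cref{thm:bs_polynomial_system}: there exists a system $\mathcal{P}$ of polynomial equations in noncommuting variables that has a solution in some division ring $D$, yet has no solution in $M_c(\mathbb{F})$ for any field $\mathbb{F}$ and any positive integer $c$. First I would invoke~\Cref{lem:atomic} to replace $\mathcal{P}$ by an atomic system $\mathcal{P}'$ with the same solvability behavior over division rings and over matrix rings, and then form the associated family $\mathcal{M}_{\mathcal{P}'}$ of von Staudt matroids.

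By~\Cref{thm:VS}~\eqref{thm:i}, the existence of a solution to $\mathcal{P}'$ over $D$ guarantees that at least one matroid $M\in\mathcal{M}_{\mathcal{P}'}$ is representable over $D$, so $M$ is skew-linear. Suppose toward a contradiction that $M$ admits a multilinear representation of order $c$ over some field $\mathbb{F}$. Then~\Cref{thm:VS}~\eqref{thm:iii} would produce a solution to $\mathcal{P}'$ in $M_c(\mathbb{F})$ (with every matrix invertible or zero), contradicting the defining property of $\mathcal{P}$ (invertible-or-zero solutions are, in particular, matrix solutions, and atomization preserves matrix solvability). Therefore $M$ is skew-linear but not multilinear, which is the desired statement.

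The real obstacle, and the genuinely new algebraic ingredient, is the proof of~\Cref{thm:bs_polynomial_system}. The strategy suggested by the section title and the paper's keywords is to base $\mathcal{P}$ on the Baumslag--Solitar relation $bab^{-1}=a^{2}$, possibly augmented by auxiliary relations. The division-ring solution should come from embedding the group algebra of a suitable torsion-free one-relator group into its Malcev--Neumann division ring of fractions, while the nonexistence of a matrix-ring solution should follow from an eigenvalue/trace-type obstruction: the relation $bab^{-1}=a^{2}$ forces the eigenvalues of (the image of) $a$ in any finite-dimensional representation to form a finite set closed under squaring, hence to consist of roots of unity and zero, and the auxiliary equations in $\mathcal{P}$ should be chosen precisely to rule out this degenerate possibility. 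Once~\Cref{thm:bs_polynomial_system} is in place, the matroid-theoretic conclusion above is essentially immediate from the von Staudt machinery already developed in~\Cref{sec:VS}.
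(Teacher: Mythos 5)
Your reduction of \Cref{thm:baumslag} to \Cref{thm:bs_polynomial_system} via \Cref{lem:atomic} and \Cref{thm:VS}~\eqref{thm:i},~\eqref{thm:iii} is exactly the paper's argument and is correct. The gap is entirely in your sketch of \Cref{thm:bs_polynomial_system}, and it is a genuine one: the relation $bab^{-1}=a^{2}$ you propose defines $\mathrm{BS}(1,2)$, which is a finitely generated metabelian group, hence residually finite and in fact linear (e.g.\ $a\mapsto\begin{bsmallmatrix}1&1\\0&1\end{bsmallmatrix}$, $b\mapsto\begin{bsmallmatrix}2&0\\0&1\end{bsmallmatrix}$ is a faithful representation). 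Your eigenvalue observation is true but not an obstruction --- invertible matrices whose spectrum is a finite set of roots of unity closed under squaring exist in abundance --- and no finite list of auxiliary equations of the kind the von Staudt machinery can encode (``such-and-such word in $a,b$ is invertible after subtracting $1$'') can rule them out: residual finiteness guarantees that any word that is nontrivial in $\mathrm{BS}(1,2)$ survives in some finite quotient, and a suitable matrix representation of that quotient then solves your augmented system. So the ``degenerate possibility'' you hope to exclude cannot be excluded from $\mathrm{BS}(1,2)$.

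The paper's actual mechanism is different and is the essential new idea: it uses $\mathrm{BS}(2,3)=\langle a,b\mid ba^{2}b^{-1}a^{-3}\rangle$, which by Meskin's theorem is \emph{not} residually finite, with the explicit witness $w=[bab^{-1},a^{-1}]$ lying in the kernel of every homomorphism to a finite group. Mal'cev's theorem (finitely generated linear groups are residually finite) then forces $w(A,B)=I$ for \emph{every} pair of invertible matrices satisfying $BA^{2}B^{-1}=A^{3}$ over any field, so the equation $z\bigl(w(x,y)-1\bigr)=1$ has no matrix solution; whereas in a division ring containing $\mathrm{BS}(2,3)$ the element $w-1$ is nonzero, hence invertible. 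The division-ring embedding itself comes from the Lewin--Lewin theorem for torsion-free one-relator groups (torsion-freeness of $\mathrm{BS}(2,3)$ via Wise), not from a Malcev--Neumann construction, which would require an ordering on the group that is not available here. Without replacing your $\mathrm{BS}(1,2)$-based obstruction by this residual-finiteness dichotomy, the proof does not go through.
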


\begin{defn}\label{def:BS_Group}
	For nonzero integers $m,n$, the \emph{Baumslag--Solitar group} $\mathrm{BS}(m,n)$ is
	\[\langle a,b \mid ba^m b^{-1}  a^{-n}\rangle.\]
\end{defn}

We will work with $\mathrm{BS}(2,3)$, which has the following properties:
\begin{enumerate}\label{prop:BS_group}
	\item\label{prop:a} The group $\mathrm{BS}(2,3)$ is not \emph{residually finite}. That is, there exists an element $w\in \mathrm{BS}(2,3)$ with $w\neq 1$ that is in the kernel of every homomorphism from $\mathrm{BS}(2,3)$ to a finite group. Such an element is given by $w=[bab^{-1},a^{-1}]$, where $[x,y]=xyx^{-1}y^{-1}$ is the commutator of the elements $x$ and $y$.
	\item\label{prop:b} The group $\mathrm{BS}(2,3)$ is a subgroup of the multiplicative group of some division ring~$D_{\mathrm{BS}}$.
\end{enumerate}

Property~\eqref{prop:a} was proved by Meskin in~\cite{Mes72}.
Property~\eqref{prop:b} follows from a theorem of Lewin and Lewin, see~\cite{LL78}. To apply their theorem, we need to know that $\mathrm{BS}(2,3)$ is torsion-free: this is a very special case of Wise's results in~\cite{Wis09}.

The next lemmas recast these properties in a more convenient form.

\begin{lemm}\label{lem:BS_multilinear}
	If $A,B$ are invertible $c\times c$ matrices over a field, and $BA^{2}B^{-1}=A^{3}$, then
	\[BAB^{-1} A^{-1} BA^{-1}B^{-1} A = I.\]
\end{lemm}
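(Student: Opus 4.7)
The identity $BAB^{-1}A^{-1}BA^{-1}B^{-1}A = I$ is $[BAB^{-1},A^{-1}]=I$, so the plan is to show that $C \coloneqq BAB^{-1}$ commutes with $A$; note that $C^2 = BA^2B^{-1}=A^3$ is automatic from the hypothesis. I would first pass to the algebraic closure of the base field (commutativity of matrices is preserved under field extensions), then invoke the Jordan--Chevalley decomposition $A=A_sA_u$ with $A_s$ semisimple, $A_u$ unipotent, and $[A_s,A_u]=I$. Since semisimple and unipotent parts are preserved under both conjugation and powers, the hypothesis splits into $BA_s^2B^{-1}=A_s^3$ and $BA_u^2B^{-1}=A_u^3$. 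It then suffices to show that $BA_sB^{-1}$ is a polynomial in $A_s$ and $BA_uB^{-1}$ is a polynomial in $A_u$; combined with $[A_s,A_u]=I$, this forces $BAB^{-1}=(BA_sB^{-1})(BA_uB^{-1})$ to commute with $A=A_sA_u$.

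For the semisimple part the crucial observation is that every eigenvalue of $A_s$ is a root of unity of order coprime to $6$. Indeed, $BA^2B^{-1}=A^3$ forces the eigenvalue multisets of $A^2$ and $A^3$ to agree, giving a permutation $\sigma$ of eigenvalues with $\lambda_i^2 = \lambda_{\sigma(i)}^3$; iterating and using the finiteness of $\sigma$ yields $\lambda_i^{3^k-2^k}=1$ for some $k$, and a direct check shows $\gcd(3^k-2^k,6)=1$. Hence $A_s^N = I$ for some $N$ coprime to $6$, so $2$ is a unit modulo $N$; choosing $m$ with $2m \equiv 1 \pmod N$ gives $BA_sB^{-1} = BA_s^{2m}B^{-1} = (BA_s^2B^{-1})^m = A_s^{3m}$, a power of $A_s$, which commutes with $A_s$.

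For the unipotent part I would split on $\ch \F$. If $\ch \F \neq 2$, every unipotent matrix has a unique unipotent square root, computed by the truncated binomial series $(I+N)^{1/2}$, whose coefficients have denominators that are powers of $2$. Since $BA_uB^{-1}$ is unipotent and squares to $A_u^3$, uniqueness forces it to equal this polynomial square root of $A_u^3$, hence a polynomial in $A_u$. If $\ch \F = 2$, write $A_u = I+N$ with $N$ nilpotent and expand $BA_u^2B^{-1}=A_u^3$ to $BN^2B^{-1} = N(I+N+N^2)$; since $I+N+N^2$ is unipotent and hence invertible, this gives $\rk(N^2) = \rk(BN^2B^{-1}) = \rk(N)$, and a short Jordan block count forces every Jordan block of $N$ to have size $1$, so $N=0$ and $A_u=I$.

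The main obstacle I anticipate is the unipotent case in characteristic $2$, where the square-root or $\log/\exp$ trick used in other characteristics breaks because $2$ is not invertible; the rank equality $\rk(N)=\rk(N^2)$ is the workaround. Once both parts are handled, the final step is immediate: $BA_sB^{-1}$ is a polynomial in $A_s$ and $BA_uB^{-1}$ is a polynomial in $A_u$ (or trivially $I$), and since $A_s$ and $A_u$ commute, each factor of $BAB^{-1}$ commutes with each factor of $A$, giving $BAB^{-1}A = A\,BAB^{-1}$ and hence the claimed identity.
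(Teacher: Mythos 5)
Your proof is correct, but it takes a genuinely different route from the paper's. The paper disposes of this lemma in two sentences modulo two black boxes: by Mal'cev's theorem the group $G=\langle A,B\rangle$ is residually finite, while by Meskin's theorem the word $w=[bab^{-1},a^{-1}]$ lies in the kernel of every homomorphism from $\mathrm{BS}(2,3)$ to a finite group, so a finite quotient of $G$ detecting $g\neq I$ would give a contradiction via $a\mapsto A$, $b\mapsto B$. You instead argue directly in linear algebra: pass to the algebraic closure, split the relation $BA^2B^{-1}=A^3$ along the Jordan--Chevalley decomposition (legitimate, since conjugation and powering respect the decomposition and it is unique), use the eigenvalue-permutation identity $\lambda_i^{2^k}=\lambda_{\sigma^k(i)}^{3^k}$ to get $A_s^N=I$ with $N$ odd and hence $BA_sB^{-1}=A_s^{3m}$, and handle the unipotent part by the unique unipotent square root when $\mathrm{char}\,\F\neq 2$ and by the rank count $\rk(N)=\rk(N^2)\Rightarrow N=0$ when $\mathrm{char}\,\F=2$. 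I checked the details and they all go through; the one step you assert without proof, uniqueness of the unipotent square root, does hold (any unipotent $V$ with $V^2=U$ commutes with $U$ and hence with the binomial-series root $W$ of $U$, and $VW^{-1}$ is then a unipotent involution, forcing $V=W$), though it merits a line of justification. The trade-off: the paper's argument is nearly free given the literature and isolates the group-theoretic mechanism (residual finiteness) cleanly; yours is longer and requires case analysis on the characteristic, but it is elementary and self-contained --- it bypasses both Mal'cev and Meskin by directly verifying that every matrix solution of the $\mathrm{BS}(2,3)$ relation kills the specific witness $w$, which makes the phenomenon more concrete and arguably more illuminating.
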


\begin{proof}
	Suppose $A,B$ are invertible $c\times c$ matrices over a field satisfying $BA^{2}B^{-1}=A^{3}$. By Mal'cev's theorem (see~\cite{Mal40}), the group $G$ generated by $A,B$ is residually finite (the theorem applies to any finitely-generated group of matrices over a field). Thus, if $g=BAB^{-1} A^{-1} BA^{-1}B^{-1} A \neq I,$ there exists a finite group $H$ and a homomorphism $G\to H$ such that $g$ has a nontrivial image in $H$. Pre-composing such a homomorphism with the homomorphism $\mathrm{BS}(2,3) \to G$ given by $a\mapsto A, b\mapsto B$ gives a contradiction to property~\eqref{prop:a}.
\end{proof}

\begin{lemm}\label{lem:BS_div_ring}
	There exists a division ring $D$ and nonzero elements $a,b\in D$ such that $ba^2 b^{-1} = a^3$ and
	\[bab^{-1} a^{-1} ba^{-1}b^{-1} a \neq 1.\]
\end{lemm}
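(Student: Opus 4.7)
The plan is to deduce the lemma directly from the two properties of $\mathrm{BS}(2,3)$ listed just before the statement. By property~\eqref{prop:b}, there is a division ring $D_{\mathrm{BS}}$ together with an injective group homomorphism $\iota\colon \mathrm{BS}(2,3)\hookrightarrow D_{\mathrm{BS}}^{\times}$. I would simply take $D=D_{\mathrm{BS}}$, and set $a=\iota(a)$, $b=\iota(b)$ where (by abuse of notation) $a,b$ on the right are the generators of $\mathrm{BS}(2,3)$ appearing in \cref{def:BS_Group}. Both are nonzero since they lie in $D^{\times}$.

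Next I would verify the two required equations. The relation $ba^{2}b^{-1}=a^{3}$ holds in $D$ because it is the defining relator of $\mathrm{BS}(2,3)$, and $\iota$ is a group homomorphism. For the inequality, recall that the commutator $[x,y]=xyx^{-1}y^{-1}$. Expanding,
\[
[bab^{-1},a^{-1}] \;=\; (bab^{-1})\,a^{-1}\,(bab^{-1})^{-1}\,(a^{-1})^{-1} \;=\; bab^{-1}a^{-1}ba^{-1}b^{-1}a,
\]
which is precisely the word appearing in the statement. By property~\eqref{prop:a}, this word represents a nontrivial element of $\mathrm{BS}(2,3)$; since $\iota$ is injective, its image is also nontrivial in $D^{\times}$, i.e.\ not equal to $1$.

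There is essentially no obstacle here beyond correctly identifying that the element flagged in property~\eqref{prop:a} as a witness to non-residual-finiteness is exactly the same word that appears in the conclusion of the lemma. All the work has been done in setting up properties~\eqref{prop:a} and~\eqref{prop:b}: Meskin's non-residual-finiteness of $\mathrm{BS}(2,3)$ and the Lewin--Lewin embedding (using Wise's torsion-freeness). The lemma is their direct combination.
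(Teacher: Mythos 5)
Your proposal is correct and is essentially the paper's own argument: the authors likewise deduce the lemma directly from the Lewin--Lewin embedding of $\mathrm{BS}(2,3)$ into a division ring together with the nontriviality of $w=[bab^{-1},a^{-1}]$ asserted in property~(a). You have merely written out in full the verification that the paper compresses into one sentence, including the (correct) expansion of the commutator into the word appearing in the statement.
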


This is clear from property~\eqref{prop:b} above, together with the fact that $w$ is nontrivial in~$\mathrm{BS}(2,3)$.

\begin{theorem}\label[theorem]{thm:bs_polynomial_system}
	The following polynomial system has a solution in the division ring $D_{BS}$, but has no solution in matrices over any field.
	\begin{equation}\label{eq:baumslag}
		xx' = 1, \quad yy' = 1, \quad yx^2 y' = x^3, \quad z(yxy' x' yx'y'x - 1) = 1.
	\end{equation}
	(In matrices, by $1$ we mean the identity matrix of the appropriate size.)
\end{theorem}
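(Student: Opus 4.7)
The plan is to read the four equations of the system~\eqref{eq:baumslag} as encoding exactly the defining features of the pair $(a,b)\in D_{\mathrm{BS}}$ used in \Cref{lem:BS_div_ring}, together with the nonvanishing of a specific word in $a$ and $b$. First I would observe that $xx'=1$ and $yy'=1$ force $x,y$ to be invertible with $x'=x^{-1}$, $y'=y^{-1}$: in a division ring this is immediate, and in $M_c(\F)$ a right inverse of a square matrix is automatically a two-sided inverse. Under this identification the third equation reads $yx^2y^{-1}=x^3$, and the factor $yxy'x'yx'y'x-1$ appearing in the fourth equation becomes $(yxy^{-1})\,x^{-1}\,(yxy^{-1})^{-1}\,x - 1 = [\,yxy^{-1},\,x^{-1}\,] - 1$. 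Hence the fourth equation is solvable (for some $z$) if and only if $[\,yxy^{-1},\,x^{-1}\,]$ is \emph{not} equal to $1$, i.e., not the identity element/matrix.

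For existence in $D_{\mathrm{BS}}$, I would invoke \Cref{lem:BS_div_ring}: it provides $a,b\in D_{\mathrm{BS}}^{\times}$ with $ba^2b^{-1}=a^3$ and $bab^{-1}a^{-1}ba^{-1}b^{-1}a\neq 1$. Setting $x=a$, $y=b$, $x'=a^{-1}$, $y'=b^{-1}$ satisfies the first three equations. Because $D_{\mathrm{BS}}$ is a division ring, the nonzero element $bab^{-1}a^{-1}ba^{-1}b^{-1}a-1$ is invertible, so we may take $z$ to be its inverse and solve the fourth equation as well.

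For nonexistence in matrices, suppose a solution exists in $M_c(\F)$ for some field $\F$ and integer $c\ge 1$. From the first three equations one has invertible $A,B\in GL_c(\F)$ with $BA^2B^{-1}=A^3$. \Cref{lem:BS_multilinear} then forces $BAB^{-1}A^{-1}BA^{-1}B^{-1}A=I_c$, so the matrix $(yxy'x'yx'y'x-1)$ is the zero matrix. The fourth equation $z\cdot 0=I_c$ is then unsolvable, giving a contradiction.

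There is no real obstacle beyond carefully unwinding the word identities; the substantive content sits entirely in the two lemmas already established, which in turn draw on Meskin's non-residual-finiteness of $\mathrm{BS}(2,3)$ (via Mal'cev's theorem for the matrix side) and the Lewin--Lewin embedding into a division ring (combined with Wise's torsion-freeness, for the division ring side). The only minor subtlety to double-check is the translation between the word appearing in \eqref{eq:baumslag} and the commutator $[bab^{-1},a^{-1}]$ in \Cref{lem:BS_multilinear}, which is a direct computation using $(yxy^{-1})^{-1}=yx^{-1}y^{-1}$.
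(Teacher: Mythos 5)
Your proof is correct and follows essentially the same route as the paper: the first two equations force invertibility, the third becomes the Baumslag--Solitar relation, and the fourth asserts invertibility (hence nonvanishing) of the word $[bab^{-1},a^{-1}]-1$, so the conclusion follows directly from \Cref{lem:BS_multilinear} and \Cref{lem:BS_div_ring}. Your extra observations (that a right inverse of a square matrix is two-sided, and the explicit identification of the word with the commutator) are correct and only make the argument more explicit than the paper's.
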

\begin{proof}
	This is a straightforward application of the two previous lemmas: The first two equations mean $x,y$ are invertible with inverses $x',y'$, respectively. The third is then equivalent to $yx^2y^{-1}=x^3$, and the last means $yxy^{-1} x^{-1} yx^{-1}y^{-1}x - 1$ has some multiplicative inverse~$z$, so it is nonzero (and invertible in any division ring).
\end{proof}

Applying \Cref{thm:VS} to the system~\eqref{eq:baumslag} gives \Cref{thm:baumslag}.

More explicitly: One of the von Staudt matroids associated to the system of the previous theorem has a representation in some division ring $D$ (since the system has a solution there).
However, none of these matroids has a multilinear representation, since the system has no solution in matrices.

\appendix
\section{Proof of Theorem \ref{thm:weyl_algebra}, second part}

In this appendix, we complete the proof of~\Cref{thm:weyl_algebra} by checking that all block column minors of the matrix $W$ given in~\Cref{sec:weyl} have rank a multiple of $p$.
We do not show this in all cases but rather present some cases as an example of the general technique.

In the proof, we use the following statements regarding the rank of block matrices.
\begin{lemm}\label{lem:block_matrices}
	Let $\F$ be a field. Let $M_1,M_2,M_3\in M_k(\F)$ be invertible $k\times k$ matrices.
	\begin{enumerate}[(i)]
		\item\label{eq:lem:i} The block matrix $
		\begin{bsmallmatrix}
		I_k & I_k\\
		M_1& M_2\\
		\end{bsmallmatrix}$
		has rank $k+\rk(M_1-M_2)$.
		\item\label{eq:lem:ii} The block matrix $
				\begin{bsmallmatrix}
				I_k      & I_k    &  0\\
				M_1 & 0 &  M_3\\
				0      & M_2     & -I_k 
				\end{bsmallmatrix}$
				has rank $2k+\rk(M_3M_2-M_1)$.	
		\item\label{eq:lem:iii} The block matrix $
		\begin{bsmallmatrix}
		 I_k    &   I_k       &    I_k    \\
		 M_1    & 0     &   M_3  \\
		 0 &  M_2&  I_k
		\end{bsmallmatrix}$
		has rank $2k+\rk(M_1 +M_3M_2 - M_1M_2)$.
	\end{enumerate}
\end{lemm}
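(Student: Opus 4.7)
All three identities are rank computations for block matrices, and all three yield to the same strategy: use the identity blocks that already appear as pivots to perform elementary block row and column operations, reducing each matrix to block-triangular or block-diagonal form from which the rank can be read off. Since block row and column operations are left and right multiplication by invertible block elementary matrices, they preserve rank.

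For part (i), use the top-left $I_k$ as a pivot: one column operation kills the top-right $I_k$ and one row operation (sweeping with $M_1$) kills the bottom-left $M_1$, leaving $\begin{bsmallmatrix} I_k & 0 \\ 0 & M_2 - M_1 \end{bsmallmatrix}$, whose rank is $k + \rk(M_1 - M_2)$.

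For parts (ii) and (iii) the first step is identical: use the top-left $I_k$ to sweep out the remaining entries in the first block row and column, reducing the problem to computing the rank of a $2 \times 2$ block submatrix and adding $k$. In case (ii) that submatrix is $\begin{bsmallmatrix} -M_1 & M_3 \\ M_2 & -I_k \end{bsmallmatrix}$; sweeping the upper-right $M_3$ against the $-I_k$ pivot (add $M_3$ times the second block row to the first) replaces the $(1,1)$ entry by $M_3 M_2 - M_1$ and kills the $(1,2)$ entry, yielding the asserted rank $2k + \rk(M_3 M_2 - M_1)$. In case (iii) the same procedure applied to $\begin{bsmallmatrix} -M_1 & M_3 - M_1 \\ M_2 & I_k \end{bsmallmatrix}$ (using the lower-right $I_k$ as the pivot) produces $-(M_1 + M_3 M_2 - M_1 M_2)$ in the $(1,1)$ corner, yielding rank $2k + \rk(M_1 + M_3 M_2 - M_1 M_2)$.

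There is no genuine obstacle here: the arguments are routine block manipulations, and the only care required is in the bookkeeping of which order to perform the sweeps and checking by direct multiplication that each sweep has the claimed effect. In fact the invertibility of $M_1, M_2, M_3$ in the hypothesis is never used in the rank calculations themselves; it merely reflects the intended context of the lemma, namely multilinear representations where the relevant blocks are invertible matrices (or zero).
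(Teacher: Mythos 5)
Your proof is correct and takes essentially the same approach the paper indicates, namely block Gaussian elimination using the identity blocks as pivots; the paper itself only states that the computations are ``by Gaussian elimination via block column operations'' and leaves them to the reader, so your write-up simply supplies the omitted details (and your remark that invertibility of $M_1,M_2,M_3$ is never used is accurate).
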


The proofs of these statements are by Gaussian elimination via block column operations from the right and are left to the reader.

To prove that all block column minors of the matrix $W$ given in~\Cref{sec:weyl} have rank a multiple of $p$ it suffices to consider pairs and triples of block columns:
Since the rank of any set of block columns is at most $3p$ we can remove some elements of any set of size at least four to a pair or triple with equal total rank.
Thus, it suffices to consider pairs and triples of block columns in the following.
We split up the discussion into two parts.

\begin{prop}
	All pairs and triples of block columns of the matrix $W$ that do not involve $r_{5}$ are of rank $2p$ or $3p$.
\end{prop}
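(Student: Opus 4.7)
The plan is an exhaustive case analysis organized by block-support pattern, with Lemma~\ref{lem:block_matrices} doing the actual rank computations. The 18 columns not involving $r_{5}$ fall into six classes according to which of the three block rows are nonzero: $O$ lives only in block row 1; $x_\infty$ only in row 2; $y_\infty$ only in row 3; each $x_i$ occupies rows 1 and 2; each $y_i$ rows 1 and 3; and each $z_i$ rows 2 and 3. Writing $A_1=I_p,\, A_2=A,\, A_3=B,\, A_4=AB,\, A_5=BA$ for the matrices assigned to $X_1,\ldots,X_5$ encodes all the column data we need.

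For pairs, I would first dispatch the trivially-independent cases: a pair of columns with disjoint supports (for instance $\{O,x_\infty\}$) or a pair such as $\{x_i,z_j\}$ (which is block-triangular after reordering the block rows) has rank $2p$ by inspection. The remaining case consists of pairs drawn from the same multi-element support class, namely $\{x_i,x_j\}$, $\{y_i,y_j\}$, or $\{z_i,z_j\}$ with $i\neq j$; Lemma~\ref{lem:block_matrices}\eqref{eq:lem:i} reduces each such case to the question whether $A_i-A_j$ is invertible.

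The central case for triples is $\{x_i,y_k,z_j\}$, to which Lemma~\ref{lem:block_matrices}\eqref{eq:lem:ii} applies directly with $M_1=A_i,\, M_2=A_k,\, M_3=A_j$, giving rank $2p+\rk(A_jA_k-A_i)$. This equals $2p$ precisely when $A_i=A_jA_k$, which happens on exactly the geometric circuits of $M_W$ of this shape: the automatic circuits $\{x_i,y_i,z_1\}$ and $\{x_i,y_1,z_i\}$ (from $X_1\cdot X_i=X_i\cdot X_1=X_i$), together with $\{x_4,y_3,z_2\}$ and $\{x_5,y_2,z_3\}$ coming from the atomic equations of $\mathcal{P}_W$. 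For any other $(i,k,j)$ the rank is $3p$ provided $A_jA_k-A_i$ is invertible. The remaining triple types---those meeting $\{O,x_\infty,y_\infty\}$ or containing two columns from a single class---split into a handful of sub-cases: configurations on the three projective lines (such as $\{O,x_i,x_j\}$ or $\{x_\infty,x_i,x_j\}$) are the other rank-$2p$ circuits, while the rest become block-triangular after reordering and give rank $3p$.

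The main obstacle is verifying that all matrix differences $A_i-A_j$ with $i\neq j$, and $A_jA_k-A_i$ with $A_jA_k\neq A_i$, are invertible over $\F_p(\lambda,\mu)$. Each such matrix has entries in $\F_p[\lambda,\mu]$, so this reduces to the nonvanishing of a polynomial determinant in two algebraically independent indeterminates. In practice, specializing $\mu=0$ turns $B$ into a nilpotent cyclic shift and simplifies each determinant to a polynomial in $\lambda$ that is straightforward to inspect; the identities $A^{p}=\lambda^{p}I_{p}$ and $B^{p}=\mu I_{p}$ are helpful in organizing the higher-degree products $X_jX_k$. In keeping with the appendix's stated style, the written proof would present one representative pair calculation and one or two representative triple calculations, and note that the remaining verifications proceed identically.
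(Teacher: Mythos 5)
Your proposal follows essentially the same route as the paper: reduce to pairs and triples, dispatch the block-triangular and disjoint-support cases by inspection, apply Lemma~\ref{lem:block_matrices}\eqref{eq:lem:i} to same-class pairs and \eqref{eq:lem:ii} to triples $\{x_i,y_k,z_j\}$, and identify the rank-$2p$ cases with the circuits; your case organization is in fact somewhat more systematic than the paper's, which only writes out the same two representative families.

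One concrete caveat about the only place you go beyond the paper, namely the suggested method for the invertibility checks: specializing $\mu=0$ does \emph{not} work uniformly. For the triple $\{x_1,y_1,z_4\}$ one needs $AB-I_p$ invertible, and
\[
\det(AB-I_p)=(-1)^{p-1}\lambda^{p}\mu ,
\]
since the diagonal of $AB-I_p$ contains a zero entry and the only surviving permutation is the $p$-cycle through the subdiagonal and the corner entry $\lambda\mu$. This determinant vanishes identically at $\mu=0$, so that specialization certifies nothing here (and similarly wherever the corner entry carries the whole determinant). A nonvanishing specialization proves nonvanishing, but a vanishing one proves nothing, so these cases must be handled by a direct expansion as above rather than by setting $\mu=0$.
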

\begin{proof}
	First, we consider pairs of block columns out of the sets $\{x_1,\dots,x_5\}$,  $\{y_1,\dots,y_5\}$, and $\{z_1,\dots,z_5\}$.
	\Cref{lem:block_matrices}~\eqref{eq:lem:i} and slight variations thereof imply that such a pair of block columns is of rank $p+\rk(M_1-M_2)$ where $M_1,M_2$ are two matrices out of the set $\{I_p, A,B,AB,BA\}$.
	 The explicit descriptions of $A,B,AB,BA$ given in~\Cref{sec:weyl} together with the fact that $\lambda$ and $\mu$ are algebraically independent elements over $\F_q$ yields that $M_1-M_2$ is invertible for all $ M_1,M_2 \in \{I_p, A,B,AB,BA\}$ with $M_1\neq M_2$.
	Thus, all pairs of block columns not involving $r_{5}$ are of rank $2p$.
	
	Now consider a triple $\{x_i,y_k,z_j\}$ for $1\le i,j,k \le 5$.
	\Cref{lem:block_matrices}~\eqref{eq:lem:ii} implies that the corresponding block column minor has rank $2p+\rk(M_jM_k-M_i)$ where $M_1,\dots,M_5$ are the matrices $I_p, A, B, AB, BA$, respectively.
	Thus, it can be verified using the explicit description of these matrices in~\Cref{sec:weyl} that the matrix $M_j\cdot M_k-M_i$ is either zero or invertible.
	Hence, the corresponding block column minor has rank $2p$ or $3p$.
\end{proof}

To complete the proof we need to consider pairs and triples involving the element $r_{5}$.

\begin{prop}
	The pairs of block columns of the matrix $W$ that involve $r_{5}$ are of rank~$2p$.
	The block columns of all triples involving $r_{5}$ have rank $2p$ or $3p$.
\end{prop}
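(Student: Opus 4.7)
The approach is exactly parallel to that of the preceding proposition, reducing each rank computation via suitable variants of \Cref{lem:block_matrices} to an invertibility (or vanishing) test for a single matrix expression in $\{I_p, A, B, AB, BA\}$; the two algebraic inputs are the Weyl relation $AB-BA = I_p$ and the algebraic independence of $\lambda$ and $\mu$ over $\F_p$.

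To begin, recall that $r_5$ is the block column $\begin{bsmallmatrix} I_p \\ BA \\ I_p \end{bsmallmatrix}$, nonzero in every block row. For each pair $\{r_5, \xi\}$, I would dispose of the rank-$2p$ claim by direct inspection: pairing $r_5$ with $O$, $x_\infty$, or $y_\infty$ gives two columns obviously independent by the positions of the $I_p$ blocks; pairing $r_5$ with $x_i$ or $y_i$ admits one block-column operation that cancels the shared top $I_p$ and leaves an invertible block in a different row; and pairing with $z_i$ is immediate since $r_5$ has an $I_p$ in the top block while $z_i$ does not.

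For triples, the circuits of $M_W$ containing $r_5$ are precisely $\{y_1, r_5, x_\infty\}$, $\{x_5, r_5, y_\infty\}$, and $\{x_4, r_5, z_1\}$, and these are the only triples whose target rank is $2p$ rather than $3p$. The first two reduce to transparent column identities: $r_5 - y_1$ equals $x_\infty$ with its single block scaled by $BA$, and $r_5 - x_5 = y_\infty$. The critical case is $\{x_4, r_5, z_1\}$: the combination $x_4 - r_5 - z_1$ has its only potentially nonzero block in the middle row, equal to $AB - BA - I_p$, which vanishes by the Weyl relation. This single use of $AB - BA = I_p$ is the algebraic heart of the argument.

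For every remaining triple $\{r_5, \xi_1, \xi_2\}$, the target rank is $3p$. Here I would first apply one block-column operation to cancel one of the $I_p$ blocks of $r_5$ against a corresponding $I_p$ in $\xi_1$ or $\xi_2$, and then apply the appropriate variant of \Cref{lem:block_matrices} to express the resulting rank as $2p + \rk(M)$ for an explicit $M$ built from $\{I_p, A, B, AB, BA\}$ by addition, subtraction, and multiplication. Invertibility of $M$ in each noncircuit case follows from the explicit triangular forms of $A, B, AB, BA$ and the algebraic independence of $\lambda$ and $\mu$ over $\F_p$, exactly as in the preceding proposition. The main obstacle is the bookkeeping of cases rather than any new mathematical difficulty; every nontrivial identity ultimately reduces to $AB-BA=I_p$.
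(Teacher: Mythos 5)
Your proposal is correct and follows essentially the same route as the paper: pairs by inspection, the three circuits $\{y_1,r_5,x_\infty\}$, $\{x_5,r_5,y_\infty\}$, $\{x_4,r_5,z_1\}$ handled by exhibiting explicit block-column dependencies (the last via $AB-BA=I_p$), and all remaining triples reduced through variants of \Cref{lem:block_matrices} to the invertibility of expressions in $\{I_p,A,B,AB,BA\}$, which follows from the triangular forms and the algebraic independence of $\lambda$ and $\mu$. You are in fact slightly more careful than the paper's own argument, which asserts that every triple containing $x_\infty$ or $y_\infty$ has rank $3p$ and thereby glosses over the circuits $\{y_1,r_5,x_\infty\}$ and $\{x_5,r_5,y_\infty\}$, whose rank-$2p$ dependencies you verify explicitly.
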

\begin{proof}
	Any pair involving $r_{5}$ clearly has rank $2p$ since $r_{5}$ is the only block column with invertible blocks in each row.
	
	So consider any triple $T$ involving $r_{5}$.
	If the triple also contains one of the elements $\{x_0,x_{\infty}, y_{\infty}\}$ the block column minor clearly has rank $3p$.
	Suppose  $T=\{ x_i,y_j,r_5 \}$ for some $1\le i,j\le 5$.
	In this case we can apply~\Cref{lem:block_matrices}~\eqref{eq:lem:iii} to conclude that the block minor $T$ has rank $2p+\rk (M_i+BAM_j-M_iM_j)$ where the matrices $M_1,\dots,M_5$ are again defined to be $I_p, A, B, AB, BA$, respectively.
	Using the explicit description given in~\Cref{sec:weyl} together with the fact that $\lambda$ and $\mu$ are algebraically independent over $\F_p$ one can again verify that the matrix $M_i+BAM_j-M_iM_j$ is invertible for all $1\le i,j\le 5$.
	Hence, the corresponding block column minor has rank $3p$.
	
	The cases  $T=\{ x_i,z_j,r_5 \}$ and $T=\{ y_i,z_j,r_5 \}$ for $1\le i,j\le 5$ can be checked analogously via a variation of \Cref{lem:block_matrices}~\eqref{eq:lem:iii}.
	Therefore, we omit these two last cases.
	The only difference occurs at the triple $\{r_{5},x_4,z_1\}$ which corresponds to a block matrix of rank~$2p$ as required by the circuits in the definition of the von Staudt matroids.
\end{proof}

\bibliographystyle{myalpha}
\bibliography{matroid.bib}

\end{document}
